\newtheorem{theorem}{Theorem}
\newtheorem{proposition}[theorem]{Proposition}
\newtheorem*{theorem*}{Theorem}
\newtheorem*{lemma*}{Lemma}
\newtheorem{lemma}[theorem]{Lemma}
\newtheorem{conj}{Conjecture}
\newtheorem{question}[conj]{Question}
\newtheorem*{corollary*}{Corollary}
\newtheorem*{defn}{Definition}
\theoremstyle{remark}
\newtheorem*{remark}{Remark}
\def\eps{\varepsilon}
\newcommand\Z{\mathbb{Z}}
\newcommand\N{\mathbb{N}}
\newcommand\R{\mathbb{R}}
\newcommand\Q{\mathbb{Q}}
\newcommand\bprime{{\hat{b}}}
\newcommand\szt{{\rm{SzT}}}
\begin{document}
\title{An $n$-in-a-row type game}
\author{Joshua Erde\and Mark Walters}
\date\today
\maketitle
\begin{abstract} 
  We consider a Maker-Breaker type game on the plane, in which each
  player takes $t$ points on their $t^\textrm{th}$ turn.  Maker wins
  if he obtains $n$ points on a line (in any direction) without any of
  Breaker's points between them. We show that, despite Maker's
  apparent advantage, Breaker can prevent Maker from winning until
  about his $n^\textrm{th}$ turn.  We actually prove a stronger
  result: that Breaker only needs to play $\omega(\log t)$ points on
  his $t^\textrm{th}$ turn to prevent Maker from winning until this
  time.

  We also consider the situation when the number of points claimed by
  Maker grows at other speeds, in particular, when Maker claims
  $t^\alpha$ points on his $t^\textrm{th}$ turn.
\end{abstract}

\section{Introduction}
The ordinary \emph{$n$-in-a-row game} is a Maker-Breaker game played
on $\Z^2$, where two players, Maker and Breaker, take turns claiming
unclaimed points in the plane.  Maker wins if he can claim $n$
consecutive points in a row either vertically, horizontally, or
diagonally, otherwise Breaker wins. It is known that for $n \leq 4$
this game is a Maker win, and for $n \geq 8$ the game is a Breaker win
\cite{Z1980}. It will be convenient to consider the
  game as consisting of a series of timesteps, each of which consists
  of one of Maker's turns and the subsequent turn of Breaker.

Erde \cite{E2012} considered a variation of this game where, 
  instead of picking one point each turn, the
  number of points picked by a player on his $t$\textsuperscript{th}
  turn is a function of $t$. In particular, suppose on
   their first turn Maker 
  and Breaker each claim $1$ point, but on their second turn they each claim $2$ 
  points, and $3$ on their third, and so on. 
  Unlike the $n$-in-a-row game this game is clearly never a Breaker win, since on his
  $n$\textsuperscript{th} turn Maker can claim
  an entire winning line.  However, Erde showed that Maker cannot win
  this game in time less than $(1-o(1))n$ (i.e., before the
  $(1-o(1))n$\textsuperscript{th} timestep).
 
  One generalization of the $n$-in-a-row game is to allow the winning
  lines to have arbitrary slopes; that is we play a Maker-Breaker game
  on $\Z^2$ where Maker wins if he can claim $n$ consecutive points in
  a line, with \emph{any} slope. Clearly this game is easier for Maker
  than the $n$-in-a-row game. Indeed Beck~\cite{B2008} showed that
  this game is a Maker win for all $n$ (recall the ordinary game is
  known to be a Breaker win for all $n\ge 8$).  Since this game is
  easier for Maker it raises the possibility that the analogous
  modified version where the number of points picked on each turn is
  increasing can be won by Maker in time less than $(1-o(1))n$.
 
    In fact, we consider an even easier game for Maker. The players
    take it in turns to claim points in $\Z^2$, with each player
    claiming $t$ points on their $t^\textrm{th}$ turn. Maker wins if
    he gets $n$ points on a straight line (in any direction at all)
    with no point of Breaker's between the first and last of these
    $n$. The points need not be consecutive and there may be other
    points of Maker or Breaker on this line. (Note this is not a
    standard Maker-Breaker game as Maker's winning sets depend on
    Breaker's points.)  We call such a line segment a \emph{winning
      line segment}.

  As before Maker can claim the whole of a winning line segment on his
  $n^\textrm{th}$ turn and so he can definitely win by time $n$. Our
  first result is that, even in this game, this is essentially the
  best Maker can do.

\begin{theorem}\label{t:basic-critical}
  In the above game Breaker can stop Maker winning before time $(1-o(1))n$.
\end{theorem}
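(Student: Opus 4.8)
The plan is to reformulate Maker's goal and then hand Breaker an Erd\H{o}s--Selfridge-type potential strategy, proving in fact the stronger statement that $\omega(\log t)$ points per turn suffice. First, the reformulation: Maker wins exactly when some maximal \emph{Breaker-free segment} --- a maximal run of consecutive lattice points on a lattice line, none of them claimed by Breaker --- contains at least $n$ of Maker's points. (If Maker holds $n$ collinear points with no Breaker point between the extreme two, these lie in one such segment; conversely any $n$ Maker-points inside a Breaker-free segment form a winning line segment.) So Breaker must keep every Breaker-free segment below $n$ Maker-points until time $(1-o(1))n$; this is hopeless past time $n$, since on turn $n$ Maker can simply claim $n$ untouched consecutive points.

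Breaker's strategy: fix a constant $\beta>1$; to each Breaker-free segment $I$ that spans at least $n$ lattice points and already holds at least two Maker-points, attach the weight $\beta^{m(I)}$, where $m(I)$ counts the Maker-points in $I$, and let $\Phi$ be the sum of all these weights. This is a finite sum, since only finitely many lines carry two or more Maker-points, and Maker has won once some $m(I)\ge n$, i.e.\ once $\Phi\ge\beta^{n}$. On his $t$th turn Breaker repeatedly finds the heaviest current segment and plays a point near its midpoint, splitting it into two pieces of roughly equal Maker-count, so that a weight $\beta^{m}$ is replaced by at most $2\beta^{\lceil m/2\rceil}$; he does this $\omega(\log t)$ times, re-evaluating after each split. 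It then suffices to show $\Phi<\beta^{n}$ throughout $t\le(1-o(1))n$.

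The verification has three ingredients: (a) $\Phi=0$ initially; (b) a bound on the increase in $\Phi$ caused by Maker's $t$th move --- extending a segment by a Maker-point multiplies its weight by $\beta$, while a fresh Maker-point additionally spawns at most $O(t^{2})$ new weight-$\beta^{2}$ segments (there are only $O(t^{2})$ existing Maker-points), so the increase is controlled by $\Phi$, by $\beta$, and by a polynomial in $t$, using a Cauchy--Schwarz/incidence estimate to cap how many lines can carry many collinear Maker-points; and (c) the fact that each of Breaker's splits essentially halves the current largest weight, so $\omega(\log t)$ of them outweigh the turn's increase provided $t=(1-o(1))n$.

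The crux --- and where essentially all the work is --- is tuning $\beta$ together with the rate ``$\omega(\log t)$'' in (b)--(c). A large $\beta$ makes a moderately long segment almost weightless, which is what lets Breaker tolerate the polynomially many such segments an adversarial, grid-like Maker can manufacture, but then a single concentrated Maker move can multiply $\Phi$ by as much as $\beta^{t}$; a $\beta$ near $1$ disarms concentrated moves but leaves each moderate segment weighing nearly $1$, so their number alone overwhelms $\Phi$. Threading this needle --- and feeding in the geometry of high-complexity directions (on a line whose primitive direction vector has norm $\rho$, a stretch of $n$ lattice points has length at least $(n-1)\rho$, so a spread-out winning line costs Maker many of his points) to make the incidence bounds bite --- is the heart of the proof. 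If one wants only Theorem~\ref{t:basic-critical}, where Breaker is allowed his full $t$ points on turn $t$, the same argument delivers it with much room to spare, and a more concrete alternative is available too: Breaker can maintain a partially built barrier consisting of the lattice points with $x\equiv 0$ or $y\equiv 0 \pmod{p}$, for a prime $p=o(n)$, over a region that keeps pace with Maker's play.
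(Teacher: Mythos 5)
Your write-up explicitly defers what you yourself call ``essentially all the work'': the joint tuning of $\beta$, the incidence input, and the number of splits per turn. That is not a technicality --- it is the whole theorem, and the mechanism you sketch does not visibly close it. The dangerous regime is not one very heavy segment (a single segment is cheap to neutralise, as in the paper's splitting lemma) but many moderately rich segments: a grid-like Maker configuration with $N$ points can support polynomially many segments each holding $\Theta(\eps n)$ Maker points, all of comparable weight, and a greedy ``split the current heaviest'' touches only one of them per split, so $\omega(\log t)$ --- or even $t$ --- splits per turn give no visible control of $\Phi$. What is needed is an \emph{amortised} statement: over any window of $s$ turns, the total number of new incidences Maker can place on the segments that survive Breaker's splitting is bounded by a Szemer\'edi--Trotter count, and an averaging argument over the surviving segments then bounds the final one. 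This is exactly the paper's route (Breaker $\eps/2$-splits the $\hat b$ heaviest active segments each turn, and the game is compared to a weighted bin game where the $\hat b$ heaviest bins are killed, with the weight added over the last $s$ turns capped by $\szt(T^\alpha s,\hat b(T)s+1)$). Note also that the Cauchy--Schwarz bound you invoke (order $N^2/k^2$ rich lines) is too weak for the quantitative claims; the $N^2/k^3+N/k$ of Szemer\'edi--Trotter is what makes the counting work, already for small $\eps$ in Theorem~\ref{t:basic-critical} and certainly for the $\omega(\log t)$ strengthening.

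Two further concrete problems. First, your invariant ``$\Phi<\beta^n$ throughout'' is not the right target: on turn $t\approx(1-\eps)n$ Maker adds about $n$ points in one move, so a Breaker-free segment holding only $\eps n$ Maker points after Breaker's move is completed immediately, before Breaker can split it. Breaker must therefore keep every active segment below roughly $\eps n$ Maker points after each of his moves (the paper's $\eps$-splitting), i.e.\ your steps (b)--(c) would have to hold at the much lower threshold $\beta^{\eps n}$, which makes the missing tuning harder, not easier. Second, the mod-$p$ barrier fallback does not work as stated: Maker may scatter his points over an arbitrarily large area (and may occupy barrier points first), so a barrier of density about $2/p$ ``over a region that keeps pace with Maker's play'' requires a number of points dictated by Maker's chosen spread, which can exceed Breaker's budget by any factor; making the barrier adaptive to where Maker is dense is again the real content of the problem, not an alternative to it.
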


\begin{remark}Throughout the paper we use the standard notations $O,o$
  and $\Omega$ as well as the common, but less standard, notation
  $f=\omega(g)$ which denotes the property that $\lim_{n\to \infty}
  f(n)/g(n)=\infty$.
\end{remark}

We also extend Theorem~\ref{t:basic-critical} substantially.  Fix
functions $m,b\colon\N\to\N$ and suppose that on his $t^\textrm{th}$
turn Maker plays $m(t)$ points and Breaker plays $b(t)$ points in his. For
simplicity we shall assume that both $m(t)$ and $b(t)$ are monotone
increasing. Roughly, we say Maker wins if he can get
  $n$ points in a row significantly before the time at which he is playing
  $n$ points in a single turn. The precise definition is the
  following.

\begin{defn}
  Define $\tau_n$ to be the earliest time by which Maker can guarantee
  to have formed a winning line segment of $n$ points.  We say the
  game is a Breaker win if $m(\tau_n)=(1-o(1))n$, and we say it is a
  Maker win (with constant $\eps$) if there exists $\eps>0$ such that
  $m(\tau_n)<(1-\eps)n$ for all sufficiently large $n$.
\end{defn}

\noindent%
We remark that for some functions $b$ and $m$ neither of the above
will hold.

We wish to prove bounds on $m$ and $b$ showing when the game is a
Maker win and when it a Breaker win. We concentrate on the case when
$m(t)=t^\alpha$. For $\alpha \geq 1$ we have a rather surprising result
which is essentially tight. 
\begin{theorem}\label{main-theorem-a>1}
  Suppose that $m(t)=t^\alpha$ for $\alpha \geq 1$. Then if $b(t)=
  O(\log t)$ the game is a Maker win whereas if $b(t) = \omega(\log t)$
  the game is a Breaker win.
\end{theorem}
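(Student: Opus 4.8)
The plan is to prove the two halves of the theorem separately; the common feature is that the $\log t$ threshold falls out of the same arithmetic balance in each.

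\medskip
\noindent\textbf{Maker's win when $b(t)=O(\log t)$.} Say $b(t)\le C\log t$. The structural observation to exploit is this: if Maker only ever builds runs of \emph{consecutive} lattice points on lines of pairwise distinct slopes, each chosen far from all the others and away from every point Breaker has so far played, then Breaker can never place a point in the interior of such a run (those lattice sites already belong to Maker), so the only way Breaker can put a permanent bound on the length of a given run is to claim a point near \emph{each} of its two ends --- a bounded number of Breaker points, essentially two, per run. Hence on any turn Breaker can "cap" only $O(b(t))=O(\log t)$ new runs, and in total over the game he caps only $O(\sum_{s}b(s))$ runs. Maker uses a \emph{batched} strategy. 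Fix a small $\epsilon>0$, put $T=\lceil (1-\epsilon)^{1/\alpha}n^{1/\alpha}\rceil$, so that $m(T)=T^\alpha=(1+o(1))(1-\epsilon)n$ and $n-T^\alpha\approx \epsilon n$. On the last $J$ turns Maker opens $r$ fresh clean runs and, on each turn, splits all of his $t^\alpha$ (by now $\approx n$) points equally among those runs not yet capped. As Breaker caps runs at rate $O(\log t)$, the surviving runs inherit an ever larger fraction of Maker's enormous per-turn allowance, so they grow geometrically; the size reached by a survivor is of order $\frac{T^\alpha}{b(T)}\ln J$. Choosing $r$ and $J$ so that, just before turn $T$, there remain more runs of length $\ge n-T^\alpha$ than Breaker can cap on turn $T-1$, Maker finishes: on turn $T$ he extends one survivor (still extendable, on at least one side, since uncapped) by its missing $\le T^\alpha$ points to reach length $n$, and wins with $m(\tau_n)\le T^\alpha<(1-\epsilon')n$.

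\medskip
\noindent\textbf{Why the constant is genuine.} That $\epsilon$ can be taken to be an honest positive constant, and not merely $o(1)$, is exactly the point where $b(t)=O(\log t)$ is used. "The batch fits inside the game" forces $J\le T$, while "survivors reach $\epsilon n$" forces $\ln J$ to be of order $\epsilon\, b(T)/(T^\alpha/n)=\epsilon\, b(T)/(1-\epsilon)$; combining these gives, up to constants, $\epsilon\le \log T/b(T)$, i.e.\ roughly $\epsilon\le\big(1+\limsup_t b(t)/(2\log t)\big)^{-1}$, which is a fixed positive constant precisely when $b(t)=O(\log t)$ and degenerates to $0$ when $b(t)=\omega(\log t)$. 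The only bookkeeping is tracking the number of capped versus uncapped runs through the batch (Breaker's total caps over the batch are at most half his points played there); Maker never overspends, since all the feeding happens during the batch turns, and fresh lines are free because there are infinitely many rational slopes while Breaker has only finitely many points.

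\medskip
\noindent\textbf{Breaker's win when $b(t)=\omega(\log t)$, and the main obstacle.} Here one needs a strategy robust against arbitrary Maker play. The naive invariant --- "after Breaker's turn $t$, every Breaker-free interval holds fewer than $n-(t+1)^\alpha$ Maker points", which would instantly prevent Maker from completing on turn $t+1$ --- cannot be maintained, since one Maker turn can push $\Theta(n^{1/\alpha})$ different intervals across the danger line while Breaker has only $\omega(\log t)=o(n^{1/\alpha})$ moves. Instead Breaker maintains a \emph{weight function}: to each Breaker-free interval $I$ on each line assign a weight $w_t(\mu_I)$ that is increasing and sufficiently convex in the number $\mu_I$ of Maker points of $I$, with a time-dependent scale chosen so that (i) if any interval ever reaches the completing threshold the total $\Phi_t=\sum_I w_t(\mu_I)$ exceeds an explicit winning value; (ii) Maker's turn-$t$ move raises $\Phi_t$ by at most a polynomial-in-$n$ amount while $t<(1-o(1))n^{1/\alpha}$; and (iii) one Breaker move --- splitting at the median (or, when there is no interior site, capping near an end) the heaviest over-weight interval --- lowers $\Phi_t$ by a definite fraction of that interval's weight. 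One then checks that $\omega(\log t)$ moves per turn suffice to drive $\Phi_t$ back below the winning value after every Maker move, for all $t$ up to $(1-o(1))n^{1/\alpha}$, and that gappy Maker configurations only help Breaker (he may split rather than merely cap), so that the clean-run case above is the worst case. Designing $w_t$ and verifying (ii)--(iii) --- in particular making these two estimates balance at exactly the $\log t$ scale, mirroring the inequality $\epsilon\le \log T/b(T)$ from the Maker side --- is the crux of the argument and the step I expect to be the main obstacle.
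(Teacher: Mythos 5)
There is a genuine gap, and it is exactly where you say you expect one: the Breaker half ($b(t)=\omega(\log t)$) is not proved. You propose a potential-function scheme with an unspecified convex weight $w_t$ and three desiderata (i)--(iii), but you neither construct $w_t$ nor verify that Maker's per-turn increase of $\Phi_t$ and Breaker's per-move decrease can be balanced at the $\log t$ scale; you explicitly flag this as the main obstacle, so the half of the theorem that is the heart of the paper is missing. Moreover, the missing ingredient is identifiable: some incidence-geometry input is essential, and your sketch contains none. The paper's proof has Breaker $\eps/2$-split the $\hat b$ heaviest active segments each turn, compares the game to a ``weighted bin'' game in which the $\hat b$ heaviest bins are killed each turn, and uses the Szemer\'edi--Trotter theorem to bound the total weight (new incidences) Maker can add to the surviving segments over any window of $s$ turns by $O\bigl((T^\alpha s)^{2/3}(\hat b(T)s)^{2/3}+T^\alpha s+\hat b(T)s\bigr)$; an averaging (harmonic-sum) lemma then bounds the final surviving segment by $O\bigl(\tfrac{1}{b(T)}(T^{(2\alpha+1)/3}b(T)^{2/3}+T^\alpha\log T+b(T)\log T)\bigr)$, which is $o(n)$ for $\alpha\ge 1$ and $b=\omega(\log t)$. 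Without a Szemer\'edi--Trotter-type bound on how many ``rich'' segments Maker can create, your claim (ii) cannot be made quantitative enough to meet (iii) at the $\log t$ threshold. Relatedly, your assertion that ``the clean-run case is the worst case'' is misleading: against consecutive runs on separate lines Maker can threaten only $O(1)$ runs of length $\eps n$ per turn (he plays at most $\approx n$ points), so $b(t)=O(1)$ would already suffice; the real difficulty is grid-like Maker configurations that put many points on many lines simultaneously, which is precisely what the incidence bound controls.

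Your Maker half ($b(t)=O(C\log t)$) is essentially the paper's argument in different clothing: the paper has Maker spread his points over $rb+1$ parallel lines, declare a line dead once Breaker touches it, and run a dyadic halving over $r\approx kn^{1/\alpha}$ timesteps so the surviving line collects $\tfrac{m}{4b}\log_2 r=\Omega(n/C)$ points, after which Maker completes in one turn; your ``runs capped at $O(\log t)$ per turn, survivor gets $\sim\tfrac{T^\alpha}{b(T)}\ln J$'' is the same harmonic/logarithmic accumulation, though your bookkeeping (geometric growth of survivors, the choice of $r$ and $J$, and the fact that non-parallel lines may share lattice points so one Breaker point can touch two of your runs) is loose where the paper's is explicit. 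So the first half is salvageable along your lines, but the second half needs the Szemer\'edi--Trotter machinery (or an equivalent incidence bound) that your sketch does not supply.
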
 

For $\alpha<1$ we have only a weaker upper bound.
\begin{theorem}\label{t:subcritical-upperbound}
  Suppose that $m(t)=t^\alpha$ for some $\alpha<1$. Then if $b(t)=
  \omega(t^{1-\alpha})$ the game is a Breaker win.
\end{theorem}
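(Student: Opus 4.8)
The plan is to give an explicit strategy for Breaker and show it prevents a winning line segment until a turn $\tau$ with $m(\tau)=(1-o(1))n$, which is exactly what ``Breaker win'' means for Theorem~\ref{t:subcritical-upperbound} once one notes the easy matching bound: on turn $\lceil n^{1/\alpha}\rceil$ Maker plays at least $n$ points, and since Breaker has so far played only finitely many points there is a horizontal line avoiding all of them on which Maker simply takes $n$ consecutive cells; hence $\tau_n\le\lceil n^{1/\alpha}\rceil$ and $m(\tau_n)\le(1+o(1))n$. Fix $\delta\in(0,1)$ and set $T=\lfloor((1-\delta)n)^{1/\alpha}\rfloor$, so that $m(t)=t^\alpha\le(1-\delta)n$ for all $t\le T$; it suffices to show Breaker can survive until time $T$. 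Fix a primitive direction and a line $\ell$ in that direction; its lattice points form a copy of $\Z$, Breaker's points on $\ell$ cut it into \emph{intervals}, and Maker owns a winning line segment precisely when some interval contains at least $n$ of his points. Call an interval \emph{dead} if it has fewer than $n$ cells (then Maker can never use it) and \emph{live} otherwise. Breaker maintains the invariant that, after each of his turns, every live interval holds at most $\delta n/4$ Maker points; since Maker adds at most $T^\alpha\le(1-\delta)n$ points on any turn $\le T$, this both forbids an immediate win (a live interval reaches at most $\delta n/4+(1-\delta)n<n$) and leaves room to restore the invariant.

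The first ingredient is a \emph{splitting lemma}: given one interval holding $k<n$ Maker points, Breaker can place $O(1/\delta)$ of his points in it so that every resulting sub-interval is dead or holds at most $\delta n/8$ Maker points. The idea: a maximal run of consecutive Maker-owned cells has length $<n$, so placing a point at the unclaimed cell flanking it isolates it as a dead piece; there are fewer than $16/\delta$ such runs of length $\ge\delta n/16$, and in the regions between them all runs are short, so Breaker sweeps left to right placing a point at the next unclaimed cell each time a further $\delta n/16$ Maker points have accrued — always possible, since the next unclaimed cell is then within $\delta n/16$ cells and a cell inside an interval is never Breaker-owned. Each placement consumes $\ge\delta n/16$ Maker points, so $O(1/\delta)$ suffice. (A Breaker point lies on infinitely many lines, but a cut made for one line can only help on the others, so the invariant is nowhere put at risk.)

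Controlling Breaker's total workload is the crux, and here the Szemer\'edi--Trotter theorem enters, in the form: among $M$ points the number of lines meeting at least $k$ of them is $O(M^2/k^3+M/k)$. An interval is pushed above $\delta n/4$ on turn $t$ only by gaining more than $\delta n/8$ of Maker's $t^\alpha$ new points, hence its line carries more than $\delta n/8$ of them; by Szemer\'edi--Trotter applied to the $t^\alpha$ new points, the number of such lines is $O(t^{2\alpha}/(\delta n)^3+t^\alpha/(\delta n))$, and summing this over thresholds bounds the number of offending intervals, hence (with the splitting lemma) Breaker's work on turn $t$, by $O(t^{2\alpha}/(\delta^4 n^3)+t^\alpha\log t/(\delta^2 n))$. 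Since $t\le T<n^{1/\alpha}$ and $\alpha<1$, both terms are $o(t^{1-\alpha})$ uniformly in $t\le T$, so they lie below $b(t)=\omega(t^{1-\alpha})$ once $n$ is large; this is exactly where $\alpha<1$ is used, since for $\alpha\ge1$ the same computation instead forces $b(t)=\omega(\log t)$, recovering Theorem~\ref{main-theorem-a>1}. As the invariant then holds through turn $T$, Maker has not won on any turn $\le T$, so $m(\tau_n)\ge m(T+1)\ge(1-\delta)n$ for all large $n$; letting $\delta\to0$ gives $m(\tau_n)=(1-o(1))n$.

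I expect the main obstacle to be precisely the clause ``pushed above $\delta n/4$ only by gaining more than $\delta n/8$ of the new points'': an interval sitting at the threshold after a Breaker turn can be tipped over by a single new Maker point, so this dichotomy fails for a naive re-cut-everything rule, and a single turn is then only bounded by the (far larger) number of all intervals currently holding $\Theta(n)$ points. The remedy is for Breaker not to re-cut an interval the instant it exceeds $\delta n/4$, but to let it sit until it becomes \emph{urgent} (completable by Maker next turn) while spending spare moves on a queue of pending cuts; one then uses Szemer\'edi--Trotter twice, once to bound the \emph{total} number of Breaker cutting-moves by $O(N/(\delta^2 n))$ via a charging argument (each cut is charged to the $\ge\delta n/8$ Maker points that accrued on its interval since that interval was last cut or created, and each Maker point is charged at most once, where $N=\sum_{t\le T}m(t)=O(T^{1+\alpha})$), and once to bound the rate at which intervals become urgent, so the queue never overruns the per-turn budget. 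Making this scheduling argument precise, together with the degenerate cases of the splitting lemma (semi-infinite intervals, runs abutting an interval boundary), is where most of the work lies.
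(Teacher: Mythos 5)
Your overall shape matches the paper's (split heavy segments cheaply as in Lemma~\ref{l:split}, use Szemer\'edi--Trotter to bound how many segments need splitting, and exploit $\alpha<1$ so that a budget of $\omega(t^{1-\alpha})$ suffices), but the step you yourself flag as the crux --- the per-turn scheduling/amortization --- is exactly where the paper's real work lies, and your sketch of it does not close the gap. Two concrete problems. First, in your charging argument the claim that ``each Maker point is charged at most once'' is false: a Maker point lies on infinitely many lines, so it belongs simultaneously to one interval in every direction, and a grid-like set of $N$ Maker points supports far more than $N/(\delta n)$ heavily-populated intervals across directions; the aggregate bound on the number of cuts must itself come from Szemer\'edi--Trotter (the number of segments with at least $\delta n/8$ points among $N=O(T^{1+\alpha})$ points is $O(N^2/n^3+N/n)$), not from a per-point charge. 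Second, bounding ``the rate at which intervals become urgent'' by Szemer\'edi--Trotter applied to a single turn's new points cannot work: urgency is measured against the threshold $n-m(t+1)$, which drifts downward as $t$ grows, so Maker can let a very large family of intervals creep upward over many turns, each gaining only $O(1)$ points per turn (hence invisible to any one-turn incidence count), and have them all cross the urgency threshold within a short window, overwhelming the per-turn budget. The queue-overrun question therefore has to be controlled simultaneously over every window of turns, not turn by turn; your earlier naive dichotomy (``pushed above $\delta n/4$ only by gaining $\delta n/8$ new points'') fails for the same reason, as you note.

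This multi-window amortization is precisely what the paper's weighted bin game supplies. There Breaker's strategy is threshold-free: on each turn he simply $\eps/2$-splits the $\bprime=\frac{\eps}{4}b$ currently heaviest active segments (Lemma~\ref{l:bin-ball-to-real}); the constraint fed into the bin game is the Szemer\'edi--Trotter incidence bound applied to \emph{every} suffix window of $s$ turns at once, $M(s)=\szt(T^{\alpha}s,\bprime(T)s+1)$; and the averaging bound of Lemma~\ref{l:solo-bin-ball}, that the surviving bin holds at most $\frac{2}{b(T)}\sum_{t}\Delta M(t)/t$, converts these windowed constraints into a contradiction with an $\eps n$-heavy survivor, giving Lemma~\ref{c:upper} and hence the theorem when $b(t)=\omega(t^{1-\alpha})$. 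So your proposal assembles the right ingredients, but the decisive amortization step is left unproved, and the partial argument offered for it is incorrect as stated; completing it would essentially require reconstructing the paper's kill-the-heaviest strategy together with the suffix-window Szemer\'edi--Trotter/averaging analysis.
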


Despite the much weaker upper bound we have no better lower bound
for~$b$ than in Theorem~\ref{main-theorem-a>1}; that is, the best we
can say is that Maker has a winning strategy if $b(t)= O( \log t)$.
However, our strategy for Breaker in both of the theorems
has a special form: Breaker never relies on placing a single point on
two of Maker's lines. More precisely Breaker can still win in all the
above cases if whenever he places any point he also has to designate a
specific direction and it only breaks Maker's winning sets through
that point in that specific direction.  (Breaker may play the same
point with a different direction but that counts as an extra point.)
We call the version of the game where Breaker has to specify this
direction the \emph{directed-Breaker} game.  We can prove that
Theorem~\ref{t:subcritical-upperbound} is essentially tight
for this game.
\begin{theorem}\label{t:subcritical-lowerbound}
  Suppose that $m(t)=t^\alpha$ for $\alpha<1$ and
  $b(t)=o(t^{1-\alpha})$. Then Maker wins the directed-Breaker
    game.
\end{theorem}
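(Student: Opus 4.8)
The plan is to give Maker an explicit (essentially non‑adaptive) strategy and show that Breaker's allowance of only $o(t^{1-\alpha})$ points on turn $t$ is too small to spoil it. Fix a small $\eps>0$; the goal is to have Maker create a winning line segment by some turn $\tau$ with $\tau^\alpha<(1-\eps)n$, i.e. $\tau<\big((1-\eps)n\big)^{1/\alpha}$. Maker's strategy is to fill in a large disc about the origin \emph{from the centre outwards}: on turn $t$ he claims the $m(t)=t^\alpha$ closest‑to‑the‑origin lattice points that are still unclaimed. Then after turn $t$ every lattice point within radius $r_t$, where $\pi r_t^2\sim \sum_{s\le t}s^\alpha\sim t^{1+\alpha}/(1+\alpha)$ and hence $r_t\asymp t^{(1+\alpha)/2}$, has been claimed, and Maker owns all of them except the at most $\sum_{s\le t}b(s)$ that Breaker has taken. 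The point of filling from the centre is that on any line $\ell$ the lattice points of $\ell$ lying in a disc of radius $r$ form exactly one sub‑segment about the foot $f_\ell$ of the perpendicular from the origin, and Maker owns (almost all of) that sub‑segment by turn $\asymp r^{2/(1+\alpha)}$; so the ``central'' portions of all potential winning lines become Maker's very early.

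Now I would argue that Breaker is essentially helpless against this. Let $\ell$ be a line with direction $v$, $\|v\|_\infty=s$, and $|f_\ell|=h$. After turn $\tau$ the disc of radius $r_\tau$ contains $\asymp \sqrt{r_\tau^2-h^2}\,/\,s$ cells of $\ell$, consecutive along $\ell$; for $h$ and $s$ small (say $h\le r_\tau/C$ and $s\le\eta r_\tau/n$, with $C$ large and $\eta$ small) this exceeds $n$. For Breaker to prevent Maker winning on $\ell$ he must have placed points \emph{of direction $v$} among these cells leaving no run of $n$ consecutive Maker cells, hence at least $\approx(\text{number of these cells})/n$ of them along this sub‑segment; in particular one of them must sit essentially at $f_\ell$, since the central run of Maker's cells is already of length $\ge n$ as soon as those cells are claimed, and a point near $f_\ell$ lies at radius $\le h+O(ns)$ and so must have been played by turn $\approx\big(h+O(ns)\big)^{2/(1+\alpha)}$. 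Crucially, in the directed game a Breaker point lies on exactly one line of each direction, so the blocking points needed for distinct lines are pairwise disjoint, and within a direction they are disjoint as well.

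Summing $(\text{cells of }\ell\text{ in the disc})/n$ over all lines with $h\le r_\tau/C$ and $s\le\eta r_\tau/n$, and using that each such direction contributes a constant fraction of the disc's $\asymp r_\tau^2$ cells while there are $\asymp(\eta r_\tau/n)^2$ such directions, the total number of Breaker points required to block every one of these lines is $\gtrsim r_\tau^4/n^3$ (even after subtracting the $O(r_\tau^3/n^2)$ lost at line‑ends). Taking $r_\tau=\delta n^{(1+\alpha)/(2\alpha)}$ with $\delta$ a suitably small constant, this is a positive constant times $n^{(2-\alpha)/\alpha}=\big(n^{1/\alpha}\big)^{2-\alpha}$, whereas, since $b$ is monotone, $\sum_{t\le\tau}b(t)\le \tau\,b(\tau)=\tau\cdot o(\tau^{1-\alpha})=o(\tau^{2-\alpha})=o\!\big(n^{(2-\alpha)/\alpha}\big)$. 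Hence Breaker runs out of points before he can block all of them, some line survives, and Maker gets $n$ points in a row by turn $\tau\asymp r_\tau^{2/(1+\alpha)}<\big((1-\eps)n\big)^{1/\alpha}$, i.e. $m(\tau_n)<(1-\eps)n$.

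The step I expect to be the main obstacle is the \emph{timing} part of the count: it is not enough that Breaker's total allowance is too small — one must check that \emph{at every radius} his cumulative allowance up to the turn at which Maker reaches that radius is already too small to have pre‑emptively blocked the lines whose feet lie inside it (lines through the region near the origin come ``due'' very early, and one needs the annulus‑by‑annulus bookkeeping to match Breaker's growth rate exactly, which is where the exponent $1-\alpha$ is forced). Carrying this out cleanly is the technical heart of the proof; it may be smoothest to do it radius‑by‑radius, and possibly to treat the ranges $\alpha<\tfrac12$ and $\alpha\ge\tfrac12$ slightly differently — for $\alpha\ge\tfrac12$ a variant in which, over the last $O(\log n)$ turns, Maker instead grows many short collinear runs in parallel and lets Breaker's scarcity of points keep at least one of them alive long enough to reach length $n$, may be easier to analyse.
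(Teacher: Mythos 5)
You have the right core count — about $n^{2/\alpha-1}$ candidate lines against Breaker's total budget $\sum_{t\le\tau}b(t)=o(\tau^{2-\alpha})=o(n^{2/\alpha-1})$ directed plays, each of which can kill at most one line — and this is exactly the count the paper uses. But your execution has a genuine gap, which you yourself flag and do not close. Two concrete problems. First, you misstate the winning condition: Maker does not need $n$ \emph{consecutive} cells, only $n$ of his points with no Breaker point \emph{of the designated direction} between them; if consecutive runs were required, then any occupied cell (a Breaker point of any direction — a ``hole'') would break runs on one line of every direction through it, and the one-play-kills-one-line counting you rely on would no longer be valid. Second — and this is what your ``timing'' worry is really circling around — in the real (non-batched) game Breaker can pre-emptively occupy lattice points of your disc before Maker reaches them, so the candidate lines acquire holes. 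Even with the correct winning condition, a candidate line is lost either to a directed blocker (cost $1$, controlled by the $o(n^{2/\alpha-1})$ count) or to occupation of all but fewer than $n$ of its cells; bounding how many lines Breaker can kill in the second way gives terms of order $B\cdot D/n$ (with $B$ his total budget and $D\asymp(r_\tau/n)^2$ the number of candidate directions) or, via Szemer\'edi--Trotter, $B^2/n^3$, and these bounds exceed your count $r_\tau^4/n^3$ of candidate lines as soon as $\alpha<1/2$. So the argument as sketched does not close for all $\alpha<1$, and the radius-by-radius bookkeeping you defer as ``the technical heart'' is not a routine finish.

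The paper avoids all of this by proving the stronger statement that Maker wins the \emph{batched} directed game, which was already observed to be easier for Breaker: Maker commits all his points up to time $(n/2)^{1/\alpha}$ at once, namely the lattice rectangle with sides $n$ and $\Omega(n^{1/\alpha})$, which contains $\Omega(n^{1/\alpha}\cdot n^{1/\alpha-1})=\Omega(n^{2/\alpha-1})$ lines each holding $n$ of Maker's points; Breaker then answers with only $o(n^{2/\alpha-1})$ directed points, each lying on at most one of these lines in its designated direction, so some line survives untouched. Because in the batched game Breaker may overlap points Maker has already chosen, the pre-emption/hole problem never arises and no timing analysis is needed; the whole proof is a three-line count. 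To rescue your online disc strategy you would need either to invoke that batched reduction, or to genuinely carry out the annulus-by-annulus argument including the occupation issue (especially for $\alpha<1/2$); as written, the proposal is incomplete.
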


\section{Elementary remarks}\label{s:elem}
We start with a trivial remark. We can think of the points that
Breaker has claimed at any time in the game as having split each line
in the plane into a number of \emph{line segments}: blocks of points
that are unclaimed or claimed by Maker, either lying in between two of
Breaker's points, or one of Breaker's points and infinity (i.e., an
infinite ray), or a line not containing any of Breaker's points.  We
call any such segments that do not contain at least $n$ integer points
in total \emph{inactive}. Inactive line segments are not useful to
Maker as they cannot be extended to a winning line segment.

Suppose that Maker wins the game in time $T$ with $m(T)\le
(1-\eps)n$. Then after Breaker's last turn there must have been an
active line segment containing at least $\eps n$ of Maker's
points. Thus, we see that it is important for Breaker to try to ensure
that no such line segments exist at the end of his turn.

\begin{defn}
  Suppose that $l$ is a line segment containing some of Maker's
  points. We say that Breaker has \emph{$\eps$-split} the line $l$ if
  he has placed points on it splitting it into smaller line segments
  such that, after the split, there are no active segments containing
  (at least) $\eps n$ of Maker's points. In cases where $\eps$ is clear
  we will just say that Breaker has \emph{split} the line.
\end{defn}

The following lemma provides a simple bound on how many points Breaker
needs to split a line.
\begin{lemma}\label{l:split}
  Suppose that $\eps>0$ and that $l$ is an active line segment
  containing less than $n$ of Maker's points. Then Breaker can $\eps$-split
  the line $l$ using at most $2/\eps$ points.
\end{lemma}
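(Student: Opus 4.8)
The plan is to exhibit an explicit set of at most $2/\eps$ points that Breaker places on $l$ and check that no resulting active sub-segment contains $\eps n$ of Maker's points. Enumerate the integer points of $l$ that lie between its two endpoints (or, in the ray/whole-line case, just the relevant integer points) in order along the line as $p_1, p_2, \dots, p_k$; since $l$ is active we have $k \ge n$, and since $l$ contains fewer than $n$ of Maker's points, Maker occupies at most $n-1$ of these. First I would handle the trivial case: if $l$ contains fewer than $\eps n$ of Maker's points then it is already $\eps$-split using $0$ points, so assume it has at least $\eps n$ of Maker's points.

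Now walk along $l$ from one end, greedily chopping off blocks each containing close to $\eps n$ of Maker's points: formally, let $q_1$ be the $\lceil \eps n \rceil^{\textrm{th}}$ point of Maker on $l$ (in the order along $l$), let $q_2$ be the next Maker point after that one such that the number of Maker points strictly between $q_1$ and $q_2$ is $\lceil \eps n\rceil - 1$ — equivalently $q_j$ is Maker's $(j\lceil \eps n\rceil)^{\textrm{th}}$ point on $l$ — and have Breaker claim each $q_j$. Since $l$ has at most $n-1 < n$ of Maker's points, the number of such $q_j$ is at most $\lfloor (n-1)/\lceil \eps n\rceil \rfloor \le \lfloor 1/\eps \rfloor \le 1/\eps$. (One should be slightly careful that the $q_j$ are genuinely distinct integer points and that Breaker is allowed to claim them — they are unclaimed or Maker's; if a $q_j$ is already Maker's, Breaker cannot take it, which is the one real subtlety, see below.) After these claims, each resulting segment contains strictly fewer than $2\lceil \eps n\rceil$ of Maker's points, which is not quite the bound we want, so instead I would aim for blocks of size roughly $\eps n/2$: take $q_j$ to be Maker's $(j\lceil \eps n/2\rceil)^{\textrm{th}}$ point, giving at most $2/\eps$ points $q_j$, and then each open segment between consecutive $q_j$'s (and the two end segments) contains fewer than $2\cdot\lceil \eps n/2\rceil \le \eps n$ Maker points for $n$ large (absorbing the rounding), hence no active segment has $\eps n$ of Maker's points.

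The step I expect to be the main obstacle is the annoyance that Breaker cannot place a point that Maker already occupies: the greedy cut-points $q_j$ might coincide with Maker's points. The fix is to place each breaking point just after the desired Maker point rather than on it — i.e. take the first integer point of $l$ after Maker's $(j\lceil \eps n/2\rceil)^{\textrm{th}}$ point that is not claimed by Maker. Such a point exists because $l$ is active (it has at least $n$ integer points, more than the at most $n-1$ Maker occupies), so there is always an unclaimed integer point available to the right; this shifts the block boundaries by at most a bounded amount and does not change the Maker-point counts in each block, so the bound of $2/\eps$ breaking points and the conclusion that every active segment has fewer than $\eps n$ Maker points both survive. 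The remaining details — verifying the arithmetic of the rounding and checking the degenerate cases where $l$ is a ray or an entire line — are routine.
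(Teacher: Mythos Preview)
Your overall plan---walk along $l$ and chop off blocks of roughly $\eps n/2$ Maker points each---is the same idea as the paper's, but your handling of the key subtlety (that the desired cut point may already be Maker's) has a genuine gap. You propose to shift each cut to the first free integer point to the right of $m_j$ and assert this ``does not change the Maker-point counts in each block.'' That assertion is false: if Maker has placed a long run of consecutive points, several of the $m_j$ lie inside that run, and all of their shifted cut points coincide at the single free point just past the run. The block immediately to the left of that common cut then contains the entire run, which can hold far more than $\eps n$ of Maker's points. Concretely, take $\eps n=10$, $n=100$, and let Maker occupy positions $1,\dots,50$ on a long active segment; then every $q_j'$ equals $51$, Breaker places a single point there, and the left piece still carries $50>\eps n$ Maker points.

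The paper fixes exactly this issue differently: it uses blocks of size $\eps n-1$ (so at most $1/\eps$ cuts) but spends \emph{two} Breaker points per cut---the last free point before the intended cut and the first free point after it---so that the block to the left is guaranteed to contain at most $\eps n-1$ Maker points regardless of how long the run is. This costs $2/\eps$ points in total. Your factor of $2$ in $2/\eps$ is spent halving the block size, which does not address the run problem; the paper spends it on bracketing each cut, which does.
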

\begin{proof}
  Breaker starts from one end and counts along  $\eps n-1$ of Maker's
  points. He would like to play the next integer point, say $x$, on
  the line but this may not be possible as $x$ may already be a Maker
  point. So instead he plays both the last free integer point before
  $x$ on the line segment and the first free integer point after
  $x$. He then repeats the process on the remaining line segment after
  the second of these points. Obviously when this process has finished
  there is no active line segment containing $\eps n$ of Maker's
  points and Breaker has played at most $2/\eps$ points.
\end{proof}

Next we describe a simpler related game that we will mention at
times. In the game as described so far there are advantages and
disadvantages to playing first: if Breaker claims a point then it stops
his opponent from claiming it (an advantage) but means Maker knows
where he has played (a disadvantage). 

In the modified version
Maker chooses some number of timesteps $T$ and $\eps>0$ such that
$m(T)=(1-\eps)n$, and then he gets to play all the points he would have
played up to that point in the original game at once. Then Breaker
plays all of his, with the additional freedom that he may choose points that Maker has
already chosen. Breaker wins if Maker's largest active line segment of
size at the end of the game has size less than $\eps n$. This game is
easier for Breaker than the standard game -- it is clear that if
Breaker can win the standard game then he can also win this game --
and it is easier to think about.  We will refer to this game as the
\emph{batched} game.

The key tool in our proofs is the Szemer\'edi-Trotter
Theorem~\cite{MR729791}. 
\begin{theorem*}[Szemer\'edi-Trotter]
  Suppose $P$ is a set of points, $L$ is a set of line segments in $\R^2$ 
  such that any two line segments meet in at most one point,
  and let $I$ be the set of incidences (an incidence is a point-line segment 
  pair with the line segment containing that point). Then
  \[
  |I|=O\left(|P|^{2/3}|L|^{2/3}+|P|+|L|\right)
  \]
\end{theorem*}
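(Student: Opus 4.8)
The plan is to prove this via Székely's method, which derives the incidence bound from the \emph{crossing-number inequality} for simple graphs. Recall that this inequality asserts that any simple graph drawn in the plane with $N$ vertices and $E$ edges, where $E\ge 4N$, has crossing number $\mathrm{cr}(G)\ge E^3/(64N^2)$. I would first record this inequality, and then apply it to a graph built directly from the incidence structure of $P$ and $L$, reading off the bound by comparing the lower bound on crossings coming from the inequality with an upper bound coming from the hypothesis that segments meet at most once.

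First I would establish the crossing-number inequality, whose proof is itself short. Euler's formula gives that a simple planar graph on $N$ vertices has at most $3N-6$ edges, so any drawing satisfies $\mathrm{cr}(G)\ge E-3N$. Now take a random induced subgraph, keeping each vertex independently with probability $p$; its expected numbers of vertices, edges and crossings are $pN$, $p^2E$ and $p^4\mathrm{cr}(G)$ respectively. Applying $\mathrm{cr}\ge E-3N$ to this subgraph and taking expectations yields $p^4\,\mathrm{cr}(G)\ge p^2E-3pN$, and the choice $p=4N/E$ (which is at most $1$ precisely because $E\ge 4N$) gives $\mathrm{cr}(G)\ge E^3/(64N^2)$.

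Next I would construct the graph. Write $n=|P|$ and $\ell=|L|$, and let $k_s$ denote the number of points of $P$ lying on the segment $s\in L$, so that $|I|=\sum_{s}k_s$. Form a drawing $G$ whose vertices are the points of $P$ and whose edges are, for each $s$, the $k_s-1$ sub-segments joining consecutive points of $P$ along $s$; the total number of edges is then $E=\sum_s(k_s-1)\ge |I|-\ell$. The crucial point, and where the hypothesis that any two segments meet in at most one point is used, is twofold. On the one hand, $G$ is a \emph{simple} graph: if two distinct points were joined by two edges, those edges would lie on two distinct segments, which would then share two common points, a contradiction; this simplicity is exactly what the crossing-number inequality requires. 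On the other hand, edges from the same segment are collinear sub-arcs and never cross, while two edges from distinct segments can cross only at the unique point those two segments share, so each pair of segments contributes at most one crossing and $\mathrm{cr}(G)\le\binom{\ell}{2}\le \ell^2/2$.

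Finally I would combine the two bounds. Since the actual vertex count is at most $n$, if $E\ge 4n$ the inequality gives $E^3/(64n^2)\le\mathrm{cr}(G)\le \ell^2/2$, whence $E=O(n^{2/3}\ell^{2/3})$ and $|I|\le E+\ell=O(n^{2/3}\ell^{2/3}+\ell)$; if instead $E<4n$, then directly $|I|\le E+\ell<4n+\ell$. In either case $|I|=O(n^{2/3}\ell^{2/3}+n+\ell)$, as required. The main obstacle is not the arithmetic but the verification that the constructed object is a genuinely simple graph, since the crossing-number inequality is false for multigraphs; this is the step that forces us to invoke the ``meet in at most one point'' condition, and it must be checked carefully.
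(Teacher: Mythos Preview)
Your argument is correct: this is Sz\'ekely's proof, and you have handled the two places where the ``at most one intersection point'' hypothesis enters (simplicity of the incidence graph, and the $\binom{\ell}{2}$ upper bound on crossings) carefully and correctly. One small point worth making explicit is that no edge of your drawing passes through a vertex in its interior---this holds because an edge on a segment $s$ joins \emph{consecutive} points of $P\cap s$, so its interior is disjoint from $P$---which ensures the drawing is a legitimate one for the purposes of the crossing-number inequality.

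As for comparison with the paper: the paper does not prove the Szemer\'edi--Trotter theorem at all. It is quoted as a known result with a citation to the original paper of Szemer\'edi and Trotter, together with the remark that the line-segment version (which is what is actually needed) is a folklore consequence of the version for full lines. So there is nothing to compare your approach against; you have supplied a self-contained proof where the paper simply invokes the literature.
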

\noindent%
and its corollary
\begin{corollary*}[Szemer\'edi-Trotter]
  Let $P$, $L$ be as above. Then the number of
  line segments containing at least $k$ points is
  \[
  O\left(\frac{|P|^2}{k^3}+\frac{|P|}{k} \right)
  \]
\end{corollary*}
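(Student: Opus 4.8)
The plan is to deduce the corollary directly from the Szemer\'edi--Trotter incidence bound by a threshold argument. Write $p=|P|$, and let $L_{\ge k}\subseteq L$ be the set of line segments containing at least $k$ points; set $m=|L_{\ge k}|$, the quantity we wish to bound. First I would restrict the incidence structure to $(P,L_{\ge k})$. On one hand, every segment in $L_{\ge k}$ is incident to at least $k$ points of $P$, so the number of incidences of this restricted structure is at least $km$. On the other hand, $(P,L_{\ge k})$ still satisfies the hypothesis that any two segments meet in at most one point, so the theorem applies and bounds the same incidence count by $O\!\left(p^{2/3}m^{2/3}+p+m\right)$. Comparing the two gives the key inequality $km\le C\left(p^{2/3}m^{2/3}+p+m\right)$ for some absolute constant $C$.

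From here it is a short case analysis on which term dominates, and the one delicate point---the step I expect to be the main obstacle---is the term linear in $m$, which cannot simply be discarded when $k$ is small. I would therefore split on the size of $k$. Suppose first that $k\ge 2C$. Then $Cm\le \tfrac{k}{2}m$, so the linear term can be absorbed into the left-hand side, leaving $\tfrac{k}{2}m\le C\left(p^{2/3}m^{2/3}+p\right)$. If the first term on the right dominates, then $km=O\!\left(p^{2/3}m^{2/3}\right)$, which after dividing by $m^{2/3}$ rearranges to $m^{1/3}=O(p^{2/3}/k)$ and hence $m=O\!\left(p^2/k^3\right)$; if instead the second dominates, then $km=O(p)$, giving $m=O(p/k)$. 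In either subcase $m=O\!\left(p^2/k^3+p/k\right)$, as required.

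It remains to handle $2\le k<2C$, where $k$ is bounded by an absolute constant. Here I would abandon the incidence bound and use the hypothesis directly: since any two segments meet in at most one point, any pair of points of $P$ lies on at most one common segment of $L$ (two segments sharing two points would meet in two points). Thus each segment in $L_{\ge k}$, having at least $k\ge 2$ points, is determined by a pair of the points it contains, so $m\le\binom{p}{2}=O(p^2)$. As $k$ lies between $2$ and the constant $2C$, we have $p^2=O\!\left(p^2/k^3\right)$, so the claimed bound holds in this regime as well. Combining the two ranges of $k$ finishes the proof. The only arithmetic to check carefully is the exponent when solving $km=O\!\left(p^{2/3}m^{2/3}\right)$, and confirming that the threshold $2C$ chosen to absorb the linear term is consistent with the constant produced by the theorem.
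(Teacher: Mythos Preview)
Your argument is correct and is the standard derivation of this bound from the incidence theorem: lower-bound the incidences on the $k$-rich segments by $km$, compare with the Szemer\'edi--Trotter upper bound, absorb the linear-in-$m$ term once $k$ exceeds a fixed constant, and handle bounded $k$ by the elementary observation that two points determine at most one segment. The exponent bookkeeping in the step $km=O(p^{2/3}m^{2/3})\Rightarrow m=O(p^2/k^3)$ is right, and your absorption threshold $k\ge 2C$ is consistent with the implicit constant $C$ from the theorem.

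The paper, however, does not supply a proof of this corollary at all; it is simply quoted as a known consequence of the Szemer\'edi--Trotter theorem (with the remark that the line-segment formulation follows from the standard line version). So there is no paper proof to compare against: what you have written is exactly the folklore argument that justifies the stated corollary, and it would serve perfectly well as the omitted proof.
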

We remark that this theorem is normally stated in terms of lines
rather than line segments, however it is a folklore result that the
line segment version is implied as a simple consequence.

As we will be using the Szemer\'edi-Trotter Theorem it is convenient
to make the following definition.
\begin{defn}
  For any $n$ and $k$ define $SzT(n,k)$ to be the maximum number of
  incidences that can occur between $n$ points and $k$
  (non-overlapping) line segments.
\end{defn}

We start by illustrating our methods by applying them to the
batched game.
\begin{proposition}\label{p:batched}
  Suppose $m(t)=t$ and $b(t)=\omega(1)$. Then Breaker can win the
  batched game.
\end{proposition}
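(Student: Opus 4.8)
The plan is to show that in the batched game Breaker has more than enough points to \emph{split} every line that currently carries many of Maker's points. Let $T$ and $\eps>0$ be the timestep and constant that Maker commits to, so that $T=m(T)=(1-\eps)n$ (I ignore the irrelevant rounding of $(1-\eps)n$ to an integer). Maker then places $N:=\sum_{t=1}^{T}t=\binom{T+1}{2}\le n^2$ points (for $n$ large, $\eps$ being a fixed constant), after which Breaker may place $B:=\sum_{t=1}^{T}b(t)$ points, and Breaker wins precisely if no active line segment ends up containing at least $\eps n$ of Maker's points. Call a line \emph{heavy} if it contains at least $\eps n$ of Maker's points, and let $\mathcal L$ be the set of heavy lines. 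Breaker's strategy is just to split each heavy line in turn: by the argument of Lemma~\ref{l:split}, a line carrying $M\ge\eps n$ of Maker's points can be split using $O(M/(\eps n))$ points, so writing $M_\ell$ for the number of Maker's points on $\ell$, Breaker needs at most $\frac{O(1)}{\eps n}\sum_{\ell\in\mathcal L}M_\ell$ points overall. (Some of the points Breaker plays to split one line may also lie on another, but that only helps.)

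The crux is therefore to bound $\sum_{\ell\in\mathcal L}M_\ell$. This quantity is exactly the number $I$ of incidences between Maker's $N$ points and the lines of $\mathcal L$, and since distinct lines meet in at most one point the Szemer\'edi--Trotter Theorem applies. I would argue as follows: on the one hand $I=O\big(N^{2/3}|\mathcal L|^{2/3}+N+|\mathcal L|\big)$; on the other hand each line of $\mathcal L$ is incident to at least $\eps n$ of the $N$ points, so $|\mathcal L|\le I/(\eps n)$. Substituting the second bound into the first and absorbing the resulting term $cI/(\eps n)$ into the left-hand side (valid once $n$ is large, as $\eps$ is fixed), a one-line bootstrap gives $I=O\big(N^2/(\eps n)^2+N\big)=O(n^2/\eps^2)$. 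Hence Breaker needs at most $\frac{O(1)}{\eps n}\cdot O(n^2/\eps^2)=O(n/\eps^3)$ points to split every heavy line, after which no active segment carries $\eps n$ of Maker's points and Breaker has won.

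Finally one checks that Breaker has this many points. Since $\eps$ is a fixed constant, $O(n/\eps^3)$ is at most $Cn$ for some constant $C=C(\eps)$ and all large $n$. But $b(t)=\omega(1)$ means $b(t)\to\infty$, so for every $M$ we have $b(t)\ge M$ for all $t$ large, whence $B=\sum_{t=1}^{(1-\eps)n}b(t)\ge \tfrac12 M(1-\eps)n$ once $n$ is large; letting $M\to\infty$ shows $B=\omega(n)$, and in particular $B>Cn$ for all sufficiently large $n$. This proves the proposition.

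The step I expect to require care is the incidence estimate. The naive approach --- splitting $\mathcal L$ dyadically according to the weights $M_\ell$ and applying the Szemer\'edi--Trotter \emph{corollary} at each level --- costs a factor of $\log n$, coming from the $|P|/k$ term in the corollary; since $b=\omega(1)$ allows $B$ to be as small as $n\log\log n\ll n\log n$, such a loss would be fatal. Applying the Szemer\'edi--Trotter Theorem directly to the point set and $\mathcal L$, together with the bootstrap above, is what keeps the bound logarithm-free; everything else is routine.
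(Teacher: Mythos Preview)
Your argument is correct. The high-level strategy matches the paper's: use Szemer\'edi--Trotter to show that Breaker needs only $O_\eps(n)$ points to destroy every heavy configuration, then observe $B=\sum_{t\le T}b(t)=\omega(n)$.

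The technical route, however, differs from the paper's. You bound the total incidence count $I=\sum_{\ell\in\mathcal L}M_\ell$ directly by feeding $|\mathcal L|\le I/(\eps n)$ back into the Szemer\'edi--Trotter \emph{theorem} and bootstrapping to $I=O(n^2/\eps^2)$. The paper instead avoids summing $M_\ell$ altogether: it subdivides each heavy line into consecutive pieces with at most $\eps n$ Maker points, and applies the \emph{line-segment} form of the Szemer\'edi--Trotter \emph{corollary} to count those pieces as $O(T)=O(n)$; since each piece needs $O(1)$ Breaker points, the same $O_\eps(n)$ bound drops out. Both devices serve the same purpose---circumventing the $\log n$ loss you correctly identify in the na\"ive dyadic argument---and both are perfectly valid. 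Your bootstrap is arguably cleaner in that it uses only the ordinary (line, not segment) Szemer\'edi--Trotter statement; the paper's subdivision trick is perhaps more visual and reappears elsewhere in the paper when segments rather than full lines are genuinely needed.
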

\begin{proof}
We aim for a contradiction: suppose Maker can win with
$T=(1-\eps)n$. Since $m(t)=t$ the total number of points Maker plays up
to the $T$\textsuperscript{th} timestep is less than $Tn$. By the corollary to
the Szemer\'edi-Trotter Theorem the number of lines segments Maker can
create containing more than $\eps n$ points is at most
\[
O\left(\frac{(Tn)^2}{(\eps n)^3}+\frac{Tn}{\eps n} \right)=O(T).
\]

Some of these line segments might contain significantly more than
$\eps n$ points, and would therefore take more than $O(1)$ points to
$\eps / 2$-split. However, we can count a line segment containing
$\sim k\eps n$ points as $\lceil k \rceil$ line segments each
containing $\leq \eps n$ points and since we are using the line
segment version of the Szemer\'edi-Trotter Theorem this doesn't change
our bound.  

Since $b(t)=\omega(1)$, Breaker has $\omega(T)$ points to
play. Therefore, by (the ideas of) Lemma~\ref{l:split}, Breaker can
$\eps$-split each of these segments. This guarantees that there are no
segments left with length greater than $\eps n$ and, thus, Breaker
wins.
\end{proof}
\section{A weighted bin game}

In this section we introduce a weighted bin game. This is a simpler
game that is easy to analyse but our main proofs will compare the real
game with this simple game. It is a single player (who we will call
Maker) game.
  
\begin{defn}
  Suppose $T$ is a constant and $b,M\colon \N\to \N$ are
  functions. The \emph{weighted bin game $(b,M,T)$} is the following
  game. The game is played with $1+\sum_{t=1}^T b(t)$ bins. On turn
  $t$ Maker places adds some weight to some bins subject to some
  constraints given below. Then the $b(t)$ largest bins are
  killed. The game lasts $T$ turns after which there is a single
  remaining live bin. Maker's aim is to maximise the weight of this
  remaining live bin.

  The constraint for Maker is the following: for any $s>0$ the total
  weight added during the last $s$ turns is at most $M(s)$.
\end{defn}

\begin{lemma}
  Suppose that Maker plays the game above playing weight $w_i$ on his
  $i^\textrm{th}$ turn for each $i$. Then the weight remaining in the
  last bin is at most
    \[
    \sum_{s=1}^T\frac{w_s}{\sum_{t=s}^Tb(t)+1}.
    \] 
\end{lemma}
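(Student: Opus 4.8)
The plan is to track, at every timestep, not the weight of the bin that eventually survives but the \emph{total} weight currently held in all live bins; the gain is that after the final turn there is only one live bin, so this total is precisely the quantity we want to bound. Write $L_s$ for the number of bins still alive immediately after the killing on turn $s$. Then $L_0=1+\sum_{t=1}^T b(t)$ and $L_s=L_{s-1}-b(s)$, so $L_{s-1}=\sum_{t=s}^T b(t)+1$ and in particular $L_T=1$; note also that $L_{s-1}>b(s)$, so there are always enough bins to kill. Let $S_s$ denote the total weight of all live bins immediately after the killing on turn $s$, with $S_0=0$.

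The heart of the argument is a one-step inequality. On turn $s$ Maker adds a total weight $w_s$, which we may assume is placed only on live bins (weight placed on a dead bin is lost, so assuming otherwise only strengthens the bound below); thus just before the killing the $L_{s-1}$ live bins carry total weight $S_{s-1}+w_s$. The killing step removes the $b(s)$ \emph{largest} of these bins, and the mean of the $b(s)$ largest entries of a list of $L_{s-1}$ numbers is at least the mean of the whole list, so the removed bins carry total weight at least $\frac{b(s)}{L_{s-1}}\bigl(S_{s-1}+w_s\bigr)$. Hence
\[
S_s\;\le\;\Bigl(1-\tfrac{b(s)}{L_{s-1}}\Bigr)\bigl(S_{s-1}+w_s\bigr)\;=\;\frac{L_s}{L_{s-1}}\bigl(S_{s-1}+w_s\bigr).
\]

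To conclude, divide through by $L_s$ and set $R_s:=S_s/L_s$; the inequality becomes $R_s\le R_{s-1}+w_s/L_{s-1}$, which telescopes to $R_T\le R_0+\sum_{s=1}^T w_s/L_{s-1}$. Since $R_0=S_0/L_0=0$ and $L_T=1$, the left-hand side is just $S_T$, the weight in the unique surviving bin, and substituting $L_{s-1}=\sum_{t=s}^T b(t)+1$ yields exactly the claimed bound. There is essentially no obstacle beyond choosing this potential: a direct turn-by-turn bound on the winning bin fails, because a priori Maker might pour all of $w_s$ onto the eventual winner on a single turn, and it is precisely the averaging inequality applied to the \emph{global} weight --- combined with the identity $L_{s-1}=\sum_{t=s}^T b(t)+1$ --- that rules this out. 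Note finally that the constraint involving $M$ is not used here; it only limits which sequences $(w_s)$ are attainable, and will enter later when this bound is optimised.
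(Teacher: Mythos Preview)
Your proof is correct and is essentially the same argument as the paper's: the quantity $R_s=S_s/L_s$ you track is precisely the average weight of the live bins, and your key inequality ``the mean of the $b(s)$ largest entries is at least the mean of the whole list'' is exactly the paper's observation that killing the largest bins does not increase the average. The paper simply tracks the average directly rather than passing through the total weight $S_s$, but the two presentations are equivalent.
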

\begin{proof}
  Suppose at the start of his $s$\textsuperscript{th} turn the
  $\sum_{t=s}^Tb(t)+1$ remaining live bins have average weight $a$.
  Clearly, after Maker adds this turn's weight the average weight is
  \[a+\frac{w_s}{\sum_{t=s}^Tb(t)+1}.\] and this average weight
  does not increase when the $b(t)$ largest bins are killed. 

  Hence, at the end of the game there is a single bin with (average)
  weight at most
  \[
  \sum_{s=1}^T\frac{w_s}{\sum_{t=s}^Tb(t)+1}
  \] 
  as claimed.
\end{proof}
\begin{remark}
  Obviously Maker can obtain the bound given in this lemma, but we
  shall not make use of that.
\end{remark}
\begin{lemma}\label{l:solo-bin-ball}
  Suppose that $T$ is fixed, that $M(s)$ is any increasing function
  $\N\to\N$ with $M(0)=0$, and that $b(t)$ has the property that, for
  any $s$, we have $\sum_{t=s}^Tb(t)\ge b(T)(T-s+1)/2$.  Let $\Delta
  M$ be the function given by $\Delta M(s)=M(s)-M(s-1)$. Then any
  strategy for the weighted bin game $(b,M,T)$ finishes with at most
  weight
  \[\frac{2}{b(T)}\sum_{t=1}^T\frac{\Delta M(t)}{t}
  \]
  in the final bin.
\end{lemma}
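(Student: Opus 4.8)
The plan is to combine the previous lemma with a rearrangement argument. By the previous lemma, any strategy playing weights $w_1,\dots,w_T$ finishes with at most $\sum_{s=1}^T w_s/(\sum_{t=s}^T b(t)+1)$ in the final bin, and by the hypothesis on $b$ we have $\sum_{t=s}^T b(t)+1 \ge b(T)(T-s+1)/2$, so this is at most $\frac{2}{b(T)}\sum_{s=1}^T \frac{w_s}{T-s+1}$. So it suffices to show that, over all weight sequences $(w_s)$ satisfying the constraint ``$\sum_{i=s}^T w_i \le M(s)$ for all $s$'' (reindexing: the total weight in the last $s$ turns is $\sum_{i=T-s+1}^T w_i \le M(s)$), the quantity $\sum_{s=1}^T \frac{w_s}{T-s+1}$ is maximised, and equals $\sum_{t=1}^T \frac{\Delta M(t)}{t}$.

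First I would reindex to make the structure transparent: write $u_j = w_{T-j+1}$ for $j=1,\dots,T$, so $u_j$ is the weight played on the $j$th-from-last turn, the coefficient $\frac{1}{T-s+1}$ becomes $\frac{1}{j}$, and the constraints become $\sum_{j=1}^s u_j \le M(s)$ for every $s$. We want to maximise $\sum_{j=1}^T u_j/j$. Since the coefficients $1/j$ are decreasing in $j$, the greedy choice is optimal: put as much weight as early (small $j$) as possible. Concretely, I claim the partial-sum constraints force $\sum_{j=1}^T u_j/j \le \sum_{t=1}^T \Delta M(t)/t$. To see this, write $\sum_{j=1}^T u_j/j$ using Abel summation: letting $U_s = \sum_{j=1}^s u_j$ (so $U_s \le M(s)$ and $U_0 = 0$), we get $\sum_{j=1}^T u_j/j = \sum_{j=1}^T (U_j - U_{j-1})/j = \sum_{j=1}^{T-1} U_j\left(\frac1j - \frac1{j+1}\right) + U_T/T$. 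Every coefficient $\frac1j - \frac1{j+1}$ and $\frac1T$ is positive, so replacing each $U_j$ by its upper bound $M(j)$ only increases the sum, giving $\sum_{j=1}^T u_j/j \le \sum_{j=1}^{T-1} M(j)\left(\frac1j-\frac1{j+1}\right) + M(T)/T$. Finally I would run Abel summation backwards on the right-hand side (using $M(0)=0$) to recognise it as $\sum_{t=1}^T (M(t)-M(t-1))/t = \sum_{t=1}^T \Delta M(t)/t$. Combining with the factor $2/b(T)$ from the first step gives the claimed bound.

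I do not expect a genuine obstacle here — the proof is essentially two applications of summation by parts sandwiching a monotonicity observation. The one place to be careful is bookkeeping: matching up the ``last $s$ turns'' phrasing of the constraint with the index $s$ in the previous lemma's bound, and making sure the telescoping of $M$ lines up with $\Delta M(t)=M(t)-M(t-1)$ and the boundary term at $t=T$ is handled by the $M(0)=0$ convention. It is also worth noting explicitly that the hypothesis $\sum_{t=s}^T b(t) \ge b(T)(T-s+1)/2$ is exactly what converts the awkward denominators $\sum_{t=s}^T b(t)+1$ into the clean harmonic denominators $j$ (up to the factor $2/b(T)$), and that we are throwing away the ``$+1$'' in the denominator, which only weakens the bound in the right direction.
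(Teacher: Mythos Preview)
Your proposal is correct and follows essentially the same approach as the paper: apply the previous lemma, use the hypothesis on $b$ to replace the denominators by $b(T)(T-s+1)/2$, and then argue that the resulting weighted sum is maximised by the choice $w_{T-s+1}=\Delta M(s)$. The only difference is that the paper simply asserts ``to maximise the bound we push weight onto the $w_t$ with $t$ large,'' whereas you supply a rigorous justification via Abel summation; your argument is thus slightly more complete than the paper's own.
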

\begin{remark}
  The constraint on $b$ is roughly requiring that $b$ not be
  super-linear in growth.
\end{remark}
\begin{proof}
  Suppose Maker plays weight $w_t$ on the $t^\textrm{th}$ turn.
  By the previous lemma the weight in the final bin is at most
  \[
  \sum_{s=1}^T \frac{w_s}{\sum_{t=s}^Tb(t)+1}.\tag{$*$}
  \] 
  The constraint on Maker is that, for any $s$,
  \[
  \sum_{t=s}^Tw_t\le M(T-s+1).
  \]
  To maximise the bound $(*)$ we push weight onto the $w_t$ with $t$
  large. Thus the weight of the final bin is at most what Maker would
  get if he played weight $\Delta M(s)$ on step $T-s+1$.

  To conclude the proof we just have to bound $(*)$ in this case.
  We have
  \[
  \sum_{s=1}^T \frac{w_s}{\sum_{t=s}^Tb(t)+1}
  \le \sum_{s=1}^T \frac{\Delta M(T-s+1)}{\frac{1}{2}b(T)(T-s+1)+1}
  \le\frac{2}{b(T)}\sum_{t=1}^T\frac{\Delta M(t)}{t}
  \] as claimed.
\end{proof}

\subsection*{From the weighted bin game to the real game}
In this section we show why the weighted bin game is relevant: we show
that if Maker can win the real game (with certain parameters) then he
can obtain a certain weight in the weighted game (with certain
related parameters). Since we know exactly what weight Maker can
obtain in the weighted game this enables us to deduce results about
the real game.
\begin{lemma}\label{l:bin-ball-to-real}
  Suppose that $m(t)=t^\alpha$ and $b(t)$ are such that the
  $n$-in-a-row game is a Maker win with constant $\eps$.  Let
  $\bprime=\bprime(t)$ be the function $\frac{\eps}{4}b(t)$. Then for
  all sufficiently large $n$ there exists $T=T(n)$, with $T^\alpha\le
  n$, such that there is a Maker strategy for the
  weighted bin game $(\bprime,M, T)$, where
  $M(s)=\szt(T^{\alpha}s,\bprime(T) s+1)$ giving at least weight $\eps
  n/2$ in the last remaining bin.
\end{lemma}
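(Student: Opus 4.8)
The plan is to translate a winning Maker strategy for the real game into a strategy for the weighted bin game by thinking of each line segment that Maker is trying to build as a "bin" whose "weight" is the number of Maker's points on it. First I would fix $\eps>0$ so that the real game is a Maker win with constant $\eps$, take $n$ large, and let $T=\tau_n$, so that Maker has a strategy guaranteeing a winning line segment (an active segment with at least $\eps n$ Maker points) by time $T$, and by the Maker-win hypothesis $m(T)=T^\alpha\le(1-\eps)n\le n$. Now run Maker's strategy against the following Breaker: at each timestep Breaker looks at all active line segments containing at least $\eps n/2$ of Maker's points, and uses Lemma~\ref{l:split} (with parameter $\eps/2$) to $\tfrac{\eps}{2}$-split each of them, using at most $4/\eps$ points per segment. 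By the Szemer\'edi--Trotter corollary, applied to the at most $sT^\alpha$ points Maker has played by time $s$ (counting a segment with $\sim k\cdot\tfrac{\eps}{2} n$ points as $\lceil k\rceil$ segments, exactly as in the proof of Proposition~\ref{p:batched}), the number of such heavy segments at time $s$ is $\szt(sT^\alpha, \cdot)$-bounded; this is where the function $M(s)=\szt(T^\alpha s,\bprime(T)s+1)$ and $\bprime=\tfrac{\eps}{4}b$ come from, the factor $\tfrac{\eps}{4}$ absorbing the $4/\eps$ cost of splitting each segment.

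The core of the argument is to set up the correspondence between Breaker's actual play and the bin game so that the bin-game constraint $\sum_{t=s}^T w_t\le M(T-s+1)$ is exactly what Szemer\'edi--Trotter gives. I would let the bins correspond to the (appropriately subdivided) active segments that have ever been heavy, let $w_s$ be the total number of new Maker points that first appear on such segments during the last $s$ timesteps, and check that killing "the $\bprime(t)$ largest bins" on turn $t$ corresponds to Breaker being able to split the $\le \bprime(t)\cdot 4/\eps = b(t)$ many currently-heavy segments — i.e., the number of heavy segments Breaker faces on turn $t$ never exceeds what his $b(t)$ points can handle, provided it stays within the $M$ budget. Since the real-game Maker is assumed to win, some bin survives all $T$ rounds with final weight $\ge \eps n - (\text{points Breaker removed from it})\ge \eps n/2$ (the slack being because a surviving heavy segment could have been shaved by up to $\eps n/2$ and still be winning), which is precisely the claimed weight $\eps n/2$ in the last live bin.

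The main obstacle, and the step I would spend the most care on, is the bookkeeping that makes "a line segment getting split by Breaker" genuinely behave like "a bin getting killed": a single Maker point can lie on many line segments in many directions, and when Breaker splits one segment he may simultaneously affect others, so the map from segments to bins is not obviously well-defined or injective. I would handle this by fixing, for each heavy segment that ever arises, the direction and the two endpoints that witness it, so that distinct heavy segments are genuinely distinct bins, and by being generous with constants (the $\eps/4$ versus $4/\eps$ versus $\eps/2$ juggling) so that the weight double-counted across directions is harmless. A secondary technical point is ensuring $M$ is increasing with $M(0)=0$ and that the hypothesis $\sum_{t=s}^T b(t)\ge b(T)(T-s+1)/2$ needed later is not required here — Lemma~\ref{l:bin-ball-to-real} only asserts existence of a Maker strategy achieving weight $\eps n/2$, so I only need the one-directional translation, not the optimality of the bin game.
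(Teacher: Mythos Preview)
Your overall plan---run Maker's winning strategy against a particular Breaker, then reinterpret the run as a play of the weighted bin game---is exactly right, but the specific Breaker strategy you choose does not work and this is the genuine gap. You have Breaker split \emph{all} active segments with at least $\eps n/2$ Maker points. Nothing guarantees there are at most $\bprime(t)$ such segments on turn $t$: after the previous split every segment has fewer than $\eps n/2$ points, so a single Maker turn can push arbitrarily many segments just over the threshold simultaneously, and the Szemer\'edi--Trotter corollary only bounds the total number of heavy segments at a fixed time (by something of order $T$ when $\alpha\ge 1$), not the number per turn. Your sentence ``the number of heavy segments Breaker faces on turn $t$ never exceeds what his $b(t)$ points can handle, provided it stays within the $M$ budget'' is circular: the $M$ budget is a cumulative incidence bound, not a per-turn segment count, and you never close this loop.

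The paper's fix is simple and you should adopt it: Breaker's strategy is to $\tfrac{\eps}{2}$-split the $\bprime(t)$ \emph{heaviest} active segments each turn, which is always feasible (cost $\le \tfrac{4}{\eps}\bprime(t)=b(t)$) and maps literally to ``kill the $\bprime(t)$ largest bins''. The bins are then precisely the $B(T)=\sum_{t\le T}\bprime(t)$ segments Breaker ever splits, together with Maker's eventual winning segment; the weight of bin $Q_i$ is $(l_i-\eps n/2)^+$, not $l_i$. This offset is what makes the correspondence work: when a segment is split, each child has at most $\eps n/2$ points and hence weight zero, so splitting really is ``killing'' rather than ``shaving'' (your phrase ``points Breaker removed from it'' mis-describes this---Breaker removes nothing, he creates fresh empty bins). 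Finally, the $M(s)$ bound is obtained by counting \emph{incidences}, not heavy segments: during the last $s$ turns the at most $\bprime(T)s+1$ still-live segments are pairwise non-overlapping (only current segments receive weight, and current segments meet in at most one point), Maker adds at most $T^\alpha s$ new points, and the added weight is at most the number of new incidences, namely $\szt(T^\alpha s,\bprime(T)s+1)$. This also dissolves your multi-direction worry without any ad hoc bookkeeping.
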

\begin{proof}
  Consider the following Breaker strategy.  As in the statement of
  the lemma let $\bprime=\frac\eps 4b$. On his turn Breaker $\eps /
  2$-splits the $\bprime$ active line segments containing the most
  points of Maker's.  Breaker can do this since, assuming Maker has
  not already won, there was no line segment with more than $n$ points
  at the end of Makers turn. Thus, by Lemma~\ref{l:split} these
  $\bprime$ line segments can be split using at most $b$ points.

  Since Maker has a winning strategy it must win against this
  particular Breaker strategy. Suppose Maker wins in some number of
  turns $T+1$. We show that this $T$ satisfies the conclusion of the
  theorem.  Trivially, from the definition of a win, we have $T^{\alpha}
  \le (T+1)^{\alpha} \le n$.  Note that, at the end of Breaker's $T$\textsuperscript{th} turn
  Maker must have had an active line segment containing at least $\eps
  n$ points.

  Let $M$ be as in the statement of the theorem and define
  $B(s)=\sum_{t=T-s+1}^T\bprime(t)$.

  Consider the $B(T)+1$ line segments given by the $B(T)$ segments
  that Breaker splits during the game together with Maker's winning
  line segment.

  We create a bin $Q_i$ corresponding to each line segment $L_i$.  We
  map a situation in the point-line game to a situation in the
  weighted bin game as follows: place weight $l_i-\eps n/2$ in bin
  $Q_i$ when $l_i$ is the number of points on line $L_i$, placing
  weight zero if this is negative. 

  Note we do not consider the line segment as being `created' until
  Breaker has played it's endpoints: in particular if Maker plays some
  points on a line segment it only adds weight to the bin
  corresponding to the current segment, not sub-segments that will be
  created later. This does not matter because, by the definition of
  splitting a line, all the newly created line segments have at most
  $\eps n/2$ points so map to empty bins. Thus Breaker's move under
  this strategy corresponds to killing the $\bprime$ heaviest bins the
  weighted bin game.

  After the $T$\textsuperscript{th} timestep in the point-line game the remaining live line
  segment has at least $\eps n$ points so the corresponding bin has
  at least weight $\eps n/2$.
  
  Finally, we check the bound on $M$. Consider the last $s$ timesteps in
  the point-line game.  In these turns the weight added in the
  weighted bin game is (at most) the number of new incidences in the
  original game.  The total number of points added in these turns is
  at most $T^{\alpha}s$ and there are $B(s)+1\le \bprime(T) s+1$
  remaining live line segments. Since we are only counting the
  \emph{current} line segments these segments only meet in single
  points, so the Szemer\'edi-Trotter Theorem does apply.

  The number of points added to these $\bprime(T) s+1$ lines is at
  most the number of points on the largest $\bprime(T) s+1$ lines
  through these (at most) $T^{\alpha}s$ points. By definition this is
  at most $\szt(T^{\alpha}s,\bprime(T) s+1)$ which concludes the proof.
\end{proof}

Next we combine Lemma~\ref{l:bin-ball-to-real} with our bounds for when Maker can win 
the weighted bin game to give an explicit bound on how quickly $b(t)$ can grow, 
if Maker wins the $n$-in-a-row game. 
\begin{lemma}\label{c:upper}
  Suppose that $m(t)=t^\alpha$ and $b(t)$ are such that the
  $n$-in-a-row game is a Maker win with constant $\eps$. Further
  suppose that, for all sufficiently large $t$ and any $s$, we have
  $\sum_{i=s}^{t}b(i)\ge b(t)(t-s+1)/2$.  Then there exists $T=T(n)$
  with $T^\alpha\le n$ such that
\[
\frac{1}{b(T)}\left(T^{(2\alpha+1)/3}b(T)^{2/3}+ T^\alpha\log T+b(T)\log T\right)=\Omega(n).
\]
\end{lemma}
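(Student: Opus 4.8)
The strategy is to combine Lemma~\ref{l:bin-ball-to-real}, which converts a Maker win into a lower bound on the weight Maker can force in an associated weighted bin game, with Lemma~\ref{l:solo-bin-ball}, which supplies a matching upper bound on that weight, and then to unwind the resulting inequality using the Szemer\'edi-Trotter estimate for $M$ together with summation by parts.

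First I would apply Lemma~\ref{l:bin-ball-to-real}: since the $n$-in-a-row game is a Maker win with constant $\eps$, for all sufficiently large $n$ there is $T=T(n)$ with $T^\alpha\le n$ and a Maker strategy in the weighted bin game $(\bprime,M,T)$, where $\bprime=\frac{\eps}{4}b$ and $M(s)=\szt(T^\alpha s,\bprime(T)s+1)$, finishing with at least weight $\eps n/2$ in the last bin. I then want to feed this into Lemma~\ref{l:solo-bin-ball} applied to $\bprime$, $M$ and this particular $T$. For that I check its hypotheses: $M(0)=\szt(0,1)=0$; $M$ is increasing, since $\szt$ is monotone in each argument and $T^\alpha s$ and $\bprime(T)s+1$ both increase in $s$; and $\sum_{t=s}^T\bprime(t)\ge\bprime(T)(T-s+1)/2$ for every $s\le T$, which is just the assumed inequality $\sum_{i=s}^t b(i)\ge b(t)(t-s+1)/2$ (valid for $T$ large, which we may assume since $T\to\infty$ with $n$) multiplied by the constant $\eps/4$. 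Lemma~\ref{l:solo-bin-ball} then bounds the weight in the final bin by $\frac{2}{\bprime(T)}\sum_{t=1}^T\Delta M(t)/t$, and comparing this with the lower bound above gives $\frac{\eps n}{2}\le\frac{2}{\bprime(T)}\sum_{t=1}^T\Delta M(t)/t$.

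It remains to estimate $S:=\sum_{t=1}^T\Delta M(t)/t$, and this is the only place where any care is needed: one must \emph{not} bound $\Delta M(t)$ crudely by $M(t)$, since $M(t)$ grows like $t^{4/3}$ times a quantity depending on $T$ and that would cost a polynomial factor in $T$. Instead I would use Abel summation (summation by parts) together with $M(0)=0$ to write $S=\frac{M(T)}{T}+\sum_{t=1}^{T-1}\frac{M(t)}{t(t+1)}\le\frac{M(T)}{T}+\sum_{t=1}^{T}\frac{M(t)}{t^2}$, which now involves only the nonnegative quantities $M(t)$. Plugging in the Szemer\'edi-Trotter bound $M(t)=\szt(T^\alpha t,\bprime(T)t+1)=O\bigl((T^\alpha t)^{2/3}(\bprime(T)t+1)^{2/3}+T^\alpha t+\bprime(T)t+1\bigr)$ and expanding via $(\bprime(T)t+1)^{2/3}=O\bigl((\bprime(T)t)^{2/3}+1\bigr)$: the term $T^{2\alpha/3}\bprime(T)^{2/3}t^{4/3}$ summed against $t^{-2}$ contributes $O(T^{(2\alpha+1)/3}\bprime(T)^{2/3})$, the lower-order term $T^{2\alpha/3}t^{2/3}$ contributes $O(T^{2\alpha/3})=O(T^\alpha)$, the term $T^\alpha t$ contributes $O(T^\alpha\log T)$, the term $\bprime(T)t$ contributes $O(\bprime(T)\log T)$, and the constant term contributes $O(1)$; the boundary term $M(T)/T$ has the same shape and is absorbed. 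Hence $S=O\bigl(T^{(2\alpha+1)/3}\bprime(T)^{2/3}+T^\alpha\log T+\bprime(T)\log T\bigr)$.

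Finally, since $\eps$ is a fixed constant we have $\bprime(T)=\Theta(b(T))$, so substituting back into $\frac{\eps n}{2}\le\frac{2}{\bprime(T)}S$ and replacing $\bprime$ by $b$ throughout gives $n=O\!\left(\frac{1}{b(T)}\bigl(T^{(2\alpha+1)/3}b(T)^{2/3}+T^\alpha\log T+b(T)\log T\bigr)\right)$, that is, $\frac{1}{b(T)}\bigl(T^{(2\alpha+1)/3}b(T)^{2/3}+T^\alpha\log T+b(T)\log T\bigr)=\Omega(n)$, as required. As indicated, the main obstacle is the summation-by-parts step that lets one work with $\Delta M$ in place of $M$; everything else is routine bookkeeping with the Szemer\'edi-Trotter inequality.
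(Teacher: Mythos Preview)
Your proof is correct and follows essentially the same route as the paper: invoke Lemma~\ref{l:bin-ball-to-real}, feed the resulting bin game into Lemma~\ref{l:solo-bin-ball}, and estimate $\sum_{t}\Delta M(t)/t$ via the Szemer\'edi--Trotter bound. The one cosmetic difference is that where you treat Abel summation as the key step (rewriting $\sum_t\Delta M(t)/t$ as $M(T)/T+\sum_t M(t)/t(t+1)$ and then bounding $M(t)$ pointwise), the paper instead replaces $M$ at the outset by its explicit Szemer\'edi--Trotter majorant $M'(s)$, observes that Maker can equally well play the relaxed game $(\bprime,M',T)$, and then computes $\Delta M'(s)=O\bigl((T^\alpha\bprime(T))^{2/3}s^{1/3}+T^\alpha+\bprime(T)\bigr)$ directly---so the sum $\sum_t\Delta M'(t)/t$ is immediate and no summation by parts is needed.
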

\begin{proof}
  By Lemma~\ref{l:bin-ball-to-real} Maker has a strategy for a
  weighted bin game $(\bprime,M,T)$ where $\bprime, M$ and $T$ are as
  in that lemma finishing with at least weight $\eps n/2$ in the last
  remaining bin.  By the Szemer\'edi-Trotter Theorem we have
\[
M(s)=\szt(T^\alpha s, \bprime(T) s +1)
  \le C'\left((T^\alpha s)^{2/3}(\bprime(T)s+1)^{2/3}+T^\alpha s+\bprime(T)s+1\right)
\]
Let $M'(s)$ be the right hand side of this equation. Since $M'\ge M$
Maker can also play the weighted bin game $(\bprime,M',T)$ finishing
with at least weight $\eps n/2$ in the last remaining bin. Now
\begin{align*}
\Delta M'(s)&=C'(T^\alpha)^{2/3} \left( \left((s+1)(\bprime(T)(s+1)+1)\right)^{2/3} - \left(s(\bprime(T)s +1)\right)^{2/3} \right)
\\& \qquad\qquad +C'T^\alpha+C'\bprime(T)\\
&=O((T^\alpha\bprime(T))^{2/3}s^{1/3}+T^\alpha+\bprime(T)).
\end{align*}
Thus, Lemma~\ref{l:solo-bin-ball} implies that
\begin{align*}
\eps n/2&=O\left( \frac{1}{\bprime(T)}\sum_{t=1}^T\frac{(T^\alpha\bprime(T))^{2/3}t^{1/3}+T^\alpha+\bprime(T)}{t}\right)\\
&=O\left( \frac{1}{\bprime(T)}\left(T^{(2\alpha+1)/3}\bprime(T)^{2/3}+ T^\alpha\log T+\bprime(T)\log T\right)\right).
\end{align*}
Since $\bprime=\frac{\eps}{4}b$ and $\eps$ is a constant this rearranges to give the result.
\end{proof}

\section{Upper bounds} 

\begin{proof}[Proof of Theorem~\ref{main-theorem-a>1}]
  Suppose $\alpha\ge 1$ and $b(t)=\omega(\log t)$ and the game is a
  Maker win with constant $\eps$. Since replacing $b$ by a smaller
  function which is also $\omega(\log t)$ only makes things harder for
  Breaker, we may additionally assume that, for all sufficiently large
  $t$ and any $s$, we have $\sum_{i=s}^{t}b(i)\ge b(t)(t-s+1)/2$. Thus
  by Lemma~\ref{c:upper} there exists some $T$, with $T^{\alpha} \leq n$, such that
\[
\frac{1}{b(T)}\left(T^{(2\alpha+1)/3}b(T)^{2/3}+ T^\alpha\log T+b(T)\log T\right)=\Omega(n).
\]
However, this is a contradiction, since $T^{(2\alpha+1)/3}\le T^\alpha \le n$ and $b(T) = \omega(\log T)$.

Thus, for any $\eps>0$ the game is not a a Maker win with constant
$\eps$ for any sufficiently large $n$, and thus the game is a Breaker
win.
\end{proof}

\begin{proof}[Proof of Theorem~\ref{t:subcritical-upperbound}]
  Suppose $\alpha<1$ and $b(t)=\omega(t^{1-\alpha})$ and the game is a
  Maker win with constant $\eps$. As in the previous proof we may
  additionally assume that, for all sufficiently large $t$ and any
  $s$, we have $\sum_{i=s}^{t}b(i)\ge b(t)(t-s+1)/2$.  Again,
  Lemma~\ref{c:upper} implies that there exists some $T$, with $T^{\alpha} \leq n$, such that 
\begin{align*}
\frac{1}{b(T)}\left(T^{(2\alpha+1)/3}b(T)^{2/3}+ T^\alpha\log T+b(T)\log T\right)
=\Omega(n).
\end{align*}
As before, since $T^\alpha\le n$ and $b(T)= \omega(T^{1-\alpha})$,
this is a contradiction. So, as in the proof of
Theorem~\ref{main-theorem-a>1}, the game is a Breaker win.
\end{proof}

\section{Lower bounds}\label{s:lower-bounds}

To complete the proof of Theorem~\ref{main-theorem-a>1} we need to 
prove that if $m(t)=t^\alpha$ and $b(t) = O( \log t$) then the game is a Maker
win.

The rough idea is that Maker can follow the (implicit) strategy given
for the weighted bin game by choosing several parallel lines, one
corresponding to each bin. If at any point breaker plays a point on
one of these lines then Maker views that line as `dead'. The ideas of
Lemma~\ref{l:solo-bin-ball} suggest that in time $T$ Maker should be
able to make a set of size roughly $m(T)\log
T/b(T)=\Omega(m(T))$. Thus, for some $\eps>0$ Maker should be able to
get $\eps n$ points by time $m(t)=(1-\eps)n$; i.e., Maker
wins.

There are two problems with this argument: the first is that
Lemma~\ref{l:solo-bin-ball} is only an upper bound and the second is
that in the weighted bin game Maker can place arbitrary weights in
bins, but in the n-in-a-row game he has to place an integer number of
points on each line.

A short calculation (which we do below) solves the first problem,
and a little care with the rounding solves the second.

\begin{proof}[Proof of Lower bound in Theorem~\ref{main-theorem-a>1}]
  Suppose that $\alpha>0$ is fixed, $m(t)=t^\alpha$ and $b(t)\le
  C\log t$ for some constant $C$. Fix $0<\eps<1/4$ to be chosen
  later 
  Let $t_1$ be minimal such that $m(t_1)>(1-\eps) n$. Let $r$
  be the largest power of 2 such that $m(t_1-r)>n/2$, and let
  $t_0=t_1-r$.  We prove that Maker can win just using his moves
  between times $t_0$ and $t_1$. During this period Breaker is playing
  at most $C\log n$ points each turn.

  In fact, we show the stronger statement: Maker can ensure that there
  is a line segment with at least $\eps n$ points after Breaker's go
  at the end of these $r$ timesteps while only playing $n/2$ points each
  turn and allowing breaker to play $C\log n$ (rather than $C\log t$)
  each turn. Obviously if Maker can do this then he can win the
  n-in-a-row-game on his next turn by playing all of his at least
  $(1-\eps)n$ points on this line segment.

  With a slight abuse of notation let $m= n/2$ and $b=C\log n$.
  Maker's strategy is as follows. He picks $rb+1$ parallel lines (not
  through any point that has already been played). At any time during
  the next $r$ timesteps we call a line live if it does not contain any of
  Breaker's points.

  During the first $r/2$ timesteps Maker places his
  $mr/2$ points as uniformly as possible on the $rb+1$ lines
  regardless of what Breaker does during these turns. Each line
  receives at least 
  \[\left\lfloor \frac{mr}{2(rb+1)}\right\rfloor\ge \frac{m}{4b} \]
  points where the inequality holds for $n$ sufficiently large.

  Now during these $r/2$ timesteps breaker has killed at most $rb/2$
  lines. If Breaker has killed less than this number then Maker
  arbitrarily designates some lines killed until there are exactly
  this many killed lines. Thus after this period there are
  exactly $rb/2+1$ live lines.

  Then in the next $r/4$ timesteps Maker places his $mr/4$ points as
  uniformly as possible in these $rb/2+1$ remaining live lines. Each line receives at least 
  \[
  \left \lfloor\frac{mr}{4(\frac{rb}{2}+1)} \right\rfloor\geq \frac{m}{4b}.
  \]

  This time Breaker has killed at most $rb/4$ lines and so at least
  $rb/4+1$ live lines remain. As before Maker artificially designates
  lines killed until exactly this many live lines remain.

  Maker repeats this $\log_2{r}$ times. At the end of these $r$
  timesteps the single remaining live line will have at least
  $\frac{m}{4b}\log_2{r}$ points on it.

  However, since $m= n/2$ and $b= C\log n$, and 
  \[r \ge
  \frac12\left(\left(\frac{3n}{4}\right)^{\frac{1}{\alpha}}-\left(\frac{n}{2}\right)^{\frac{1}{\alpha}})\right)=kn^{1/\alpha}\]
  for some constant $k$ depending only on $\alpha$.  Thus,
\[
\frac{m}{4b}\log_2{r} \ge \frac{n}{8C\log n}\log_2 (kn^{1/\alpha})=\frac{n}{8C\alpha\log 2}(1-o(1))
\]

Hence, if we set $\eps<\frac{1}{8C\alpha\log 2}$, by the end of
these $r$ timesteps Maker has placed at least $\eps n$ points in the
last line, and so Maker wins.\end{proof}

\begin{remark}
    This proof shows that, for any fixed $\alpha$, there
    is a constant $K_1$ such that if $b(t) \leq C \log n$ then Maker
    can win with constant $K_1 / C$. In contrast, in the proof of the
    upper bound of Theorem \ref{main-theorem-a>1}, we saw that,
    provided $\alpha\ge 1$, there exists $K_2$ such that if $b(t) \geq C
    \log n$ Breaker can prevent Maker from winning with constant $K_2
    / \sqrt{C}$. Thus, in this more precise formulation we do have a
    slight gap between our upper and lower bounds even in the case
    $\alpha\ge 1$.
\end{remark}
\subsection*{The directed and batched games for $\alpha < 1$} 
In this subsection we prove Theorem~\ref{t:subcritical-lowerbound}
giving the lower bound for directed-Breaker version of the game.

\begin{proof}[Proof of Theorem~\ref{t:subcritical-lowerbound}]
  In fact we
  show the stronger result: with $m$ and $b$ as in the statement of
  the theorem Maker can win the batched version of the
  directed-Breaker game.
	
  Indeed, Maker chooses to play until time $m(t)=n/2$; i.e., until
  time $t=(n/2)^{1/\alpha}$. During this time Maker plays
  $\Omega(n^{1+1/\alpha})$ points in total. He plays these as the
  integer points in a rectangle with sides $n$ and
  $\Omega(n^{1/\alpha})$. This sets contains
  $\Omega(n^{1/\alpha}n^{1/\alpha-1})=\Omega(n^{2/\alpha-1})$ lines
  containing $n$ of Maker's points ($\Omega(n^{1/\alpha})$ starting
  points and $\Omega(n^{1/\alpha-1})$ gradients).

  Breaker has at most  
  \[b(n^{1/\alpha})n^{1/\alpha}=o(n^{1/\alpha-1}n^{1/\alpha})=o(n^{2/\alpha-1})\]
  points to play so at least one of Maker's lines does not receive a
  point and, thus, Maker wins.
\end{proof}

We have seen in Theorem~\ref{t:subcritical-upperbound} that we can
match this lower bound for the directed-Breaker game. However, in the
ordinary batched game (i.e., not the directed-Breaker version) Breaker
can win whenever $b(t)=\omega(\log t)$.

\begin{proposition}
  Suppose that $\alpha<1$, $m(t)=t^\alpha$ and $b(t)=\omega(\log
  t)$. Then Breaker can win the batched game.
\end{proposition}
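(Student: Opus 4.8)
The plan is to recast Breaker's task as hitting a family of ``windows'' and then to round a cheap fractional hitting set. Fix a constant $\eps>0$; we show Breaker wins the batched game played with this $\eps$, which is all that is needed for the batched game to be a Breaker win. Let $T=T(n)$ satisfy $m(T)=(1-\eps)n$, so that $T=\Theta(n^{1/\alpha})\to\infty$, Maker plays at most $P:=\sum_{t\le T}m(t)=\Theta(T^{1+\alpha})$ points in total, and Breaker has $B:=\sum_{t\le T}b(t)$ points. Since $b$ is monotone and $b(t)=\omega(\log t)$, we have $B\ge \frac{T}{2}\,b(T/2)=\omega(T\log T)$.

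Call any set of $\lceil\eps n\rceil$ consecutive points of Maker's on a common line a \emph{window}, and say Breaker \emph{meets} a window if he claims one of its points. If Breaker meets every window then every maximal Breaker-free segment of a line carries fewer than $\eps n$ of Maker's points — otherwise its first $\lceil\eps n\rceil$ such points would be an unmet window — and in particular every active segment does, so Breaker wins. Moreover, in the batched game Breaker is allowed to claim points Maker has already chosen, so it is enough for Breaker to claim a set $S$ of \emph{Maker's} points that meets every window. The point of this reformulation is that one should not split Maker's long segments one at a time: there can be on the order of $T^{2-\alpha}$ segments carrying roughly $\eps n$ points each (for instance against an $n\times\Theta(n^{1/\alpha})$ grid), so that approach costs about $T^{2-\alpha}$ Breaker points, which exceeds $B$ when $\alpha<1$.

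Such a set $S$ exists with $|S|=O(T\log T)$. Indeed, assigning weight $1/\lceil\eps n\rceil$ to each of Maker's $P$ points is a fractional hitting set for the windows — each window, having exactly $\lceil\eps n\rceil$ points, receives total weight $1$ — of total weight $P/\lceil\eps n\rceil=\Theta(T)$; and there are at most $P^3$ windows, so a standard rounding loses only a logarithmic factor. Concretely, for large $n$ put each of Maker's points into $S$ independently with probability $\pi=9\ln P/\lceil\eps n\rceil<1$: then $\E|S|=\pi P=O\big(\frac{P}{\eps n}\ln P\big)=O(T\log T)$, each window is unmet with probability at most $(1-\pi)^{\lceil\eps n\rceil}\le e^{-9\ln P}=P^{-9}$, a union bound over the $\le P^3$ windows bounds the probability that some window is unmet by $P^{-6}<\frac12$, and Markov's inequality gives $\Pr(|S|>2\E|S|)<\frac12$; hence with positive probability $S$ meets every window while $|S|\le 2\E|S|=O(T\log T)$.

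Finally, since $B=\omega(T\log T)$, for all large $n$ Breaker has enough points to include such a set $S$ among his moves, and playing $S$ he wins the batched game. The only point needing care is the window reformulation, which exposes a $\Theta(T)$-weight fractional hitting set where naive splitting would cost $\Theta(T^{2-\alpha})$; the segment/hitting-set equivalence, the crude count of windows, and the rounding step are all routine, and — unlike the $\alpha\ge1$ arguments — this proof makes no use of Szemer\'edi--Trotter.
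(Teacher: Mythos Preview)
Your proof is correct and shares the paper's high-level strategy: randomly sample Maker's points, use a union bound to show the sample hits every long piece with positive probability while having size $O(T\log T)$, and note that $B=\omega(T\log T)$ makes this affordable. The paper samples each Maker point with probability $p=\tfrac{2\log T}{\eps T^\alpha}$, invokes the Szemer\'edi--Trotter corollary to bound the number of segments carrying at least $\eps T^\alpha$ Maker points by $O(T^{2-\alpha})$, and then runs the union bound with per-segment failure probability $\approx T^{-2}$.

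Your genuine simplification is to dispense with Szemer\'edi--Trotter altogether: since the number of windows is trivially at most $P^3$ (a polynomial in $T$), a constant multiple of the same sampling probability already drives the union bound to $P^{-6}$, and the expected sample size is still $\Theta(T\log T)$. The fractional-hitting-set framing makes this transparent --- the uniform weights give total weight $P/\lceil\eps n\rceil=\Theta(T)$, and rounding costs only a $\log$ factor regardless of how many windows there are. So for this proposition the paper's appeal to Szemer\'edi--Trotter is unnecessary; your argument is more elementary and arguably cleaner, at the cost of slightly worse (but irrelevant) constants.
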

\begin{proof}
Suppose Maker chooses $T$ and $\eps>0$ such that $m(T) = (1-\eps)n$,
and has chosen his points. Note that $T^\alpha\le n$ and that $T\to
\infty$ as $n\to \infty$.  We show that Breaker can prevent Maker from
forming any line segment with $\eps T^\alpha\le \eps n$ points. We use
probabilistic methods to construct Breaker's set. Let $A$ be a subset
of Maker's points chosen independently at random with probability
$p=\frac{2\log T}{\eps T^\alpha}$.

  Maker plays at most $T^{1+\alpha}$ points so, by the
  Szemer\'edi-Trotter Theorem, Maker has at most
  \[
  O\left(\frac{(T^{1+\alpha})^2}{(\eps T^\alpha)^3} +\frac{T^{1+\alpha}}{\eps T^\alpha}\right)=O(T^{2-\alpha})
  \] (since $\alpha<1$) line segments with
  more than $\eps T^\alpha$ points in them. (As in Section \ref{s:elem} we
  are considering any line segments containing $\sim k \eps T^\alpha$ points
  as $\lceil k\rceil$ line segments each containing at most $\eps T^\alpha$
  points.)

  The probability that the set $A$ contains no point from a line
  segment of length $\eps T^\alpha$ is $(1-\frac{2\log T}{\eps
    T^\alpha})^{\eps T^\alpha}\approx T^{-2}$. Hence the probability
  that there exists such a line segment that does not receive a point from $A$
  is $O(T^{2-\alpha}\times T^{-2})=O(T^{-\alpha})$ which is less than
  $1/10$ for $n$, and thus $T$, sufficiently large.

  The probability that the set $A$ contains at most
  $pT^{1+\alpha}=\frac{2}{\eps}T\log T$ points is approximately
  $1/2$. Thus, with positive probability, the set $A$ contains at most
  $\frac{2}{\eps}T\log T$ points and contains a point from every one
  of Maker's line segments of length at least $\eps T^\alpha$. In
  particular, there exists a set $A'$ satisfying both these conditions.

  Now, Breaker gets to play $B=\sum_{t=1}^Tb(t)$ points. Since
  $b(t)>\frac{4}{\eps}\log t$ for all sufficiently large $t$ we see that
  $B>\frac{2}{\eps}T\log T$ for all sufficiently large $T$ (and thus
  for all sufficiently large $n$).

  Thus, Breaker's strategy is to pick the set $A'$ given above. This
  shows that he can stop Maker forming a line of length $\eps
  T^\alpha<\eps n$ and so Breaker wins.
\end{proof}

\begin{remark}
  We note that, by a similar argument to Proposition~\ref{p:batched},
   Breaker can win the batched game with $m(t) = t^\alpha$ and $b(t) = \omega(1)$ for all
   $\alpha \geq 1$.
\end{remark}
\section{Extensions of the results.}
Although we have stated and proved the results in $\Z^2$ they apply in
rather more generality.  Indeed, since the proofs of the upper bounds
only rely on the Szemer\'edi-Trotter Theorem they apply anywhere that
theorem holds.  In particular, they apply in higher dimensions (with
the winning sets being lines), in $\Z^d$, $\Q^d$ and $\R^d$, and in
cases where the winning sets are more general curves: for example
solutions to polynomial equations, or any other curves any two of
which only intersect in a bounded number of places. In particular if
$\alpha\ge 1$ then Breaker can win any of these games whenever
$b(t)=\omega(\log t)$.

\section{Open questions}
In the case where $\alpha \geq 1$ there is a clear
threshold between a Maker win and a Breaker win, however when $\alpha
< 1$ the bounds are still very far apart. Our key question is to find
the correct bound in this case. Define the threshold function
\[ \beta_c(\alpha)=\sup\{\beta\colon\text{The game with $m(t)=t^\alpha$ and $b(t)=t^\beta$ is a Maker win.}\}
\]
\begin{question}
  Find $\beta_c(\alpha)$ for $\alpha<1$.
\end{question}
Our lower bound for $b(t)$ in this case is only logarithmic so we do
not even know the answer to the following simpler question.
\begin{question}
  Is $\beta_c(\alpha)>0$ for any $\alpha$?
\end{question}

The form of our bounds seem to suggest that as $\alpha$ increases
Breaker can win with fewer and fewer points. Indeed our upper bound
for $b$ is monotone decreasing.  Perhaps the actual threshold is also monotonic?
\begin{question}
  Is $\beta_c(\alpha)$ monotone decreasing?
\end{question}

We saw in the previous section that the results generalise to many
other settings. In many senses the most natural setting for our result
is $\Q^2$ rather than $\Z^2$. We have seen that for Maker to win with
constant $\eps$ Maker must guarantee to have a line segment with at
least $\eps n$ points on it after Breaker has played.  In $\Q^2$ in
the case $m(t)=b(t)$ we do not even know that Maker can
\emph{ever} guarantee to have such a line segment.

\begin{question}
  Fix $n$ and suppose that the game is played on $\Q^2$ with
  $m(t)=b(t)=t$.  Can Maker guarantee to have a line segment
  containing $n$ points after Breaker's move?  If so, by what time can
  he guarantee to have such a line segment? 
\end{question}

Another extension where we don't know the answer is the
following: define Maker's winning set to be sets of $n$ points whose
convex hull contains none of Breaker's points. This obviously
generalises the $n$ points on a line with none of Breaker's points
between them that we have been considering in this paper.
\begin{question}
  Let Maker's winning set be sets of $n$ points in $\Z^d$ whose convex
  hull contains none of Breaker's points. What is the threshold for
  $b(t)$ when $\alpha=1$? In particular can Maker win this game with
  $m(t)=t$ and $b(t)=t^\eps$ for some $\eps>0$?
\end{question}

\bibliography{szemtrot}
\bibliographystyle{plain}

\end{document}